\theoremstyle{plain}
\newtheorem{thm}{\protect\theoremname}
  \theoremstyle{plain}
  \newtheorem{lem}[thm]{\protect\lemmaname}
  \theoremstyle{remark}
  \newtheorem{rem}[thm]{\protect\remarkname}
  \theoremstyle{plain}
  \newtheorem{prop}[thm]{\protect\propositionname}
  \theoremstyle{definition}
  \newtheorem{example}[thm]{\protect\examplename}
\newcommand{\R}{\mathbb{R}}
\newcommand{\Z}{\mathbb{Z}}
\newcounter{EQNR}
\providecommand{\lemmaname}{Lemma}
\providecommand{\theoremname}{Theorem}
\providecommand{\propositionname}{Proposition}
\providecommand{\remarkname}{Remark}
\providecommand{\examplename}{Example}
  \providecommand{\examplename}{Example}
  \providecommand{\lemmaname}{Lemma}
  \providecommand{\propositionname}{Proposition}
  \providecommand{\remarkname}{Remark}
\providecommand{\theoremname}{Theorem}
\begin{document}

\title{The parametrix construction of the heat kernel on a graph}


\author{Gautam Chinta \and Jay Jorgenson \footnote{The second-named author acknowledges grant support
from PSC-CUNY Award 65400-00-53, which was jointly funded
by the Professional Staff Congress and The City University of New York.} \and Anders Karlsson
\footnote{The third-named author acknowledges grant support by the Swiss NSF grants 200020-200400, 200021-212864 and the Swedish
Research Council grant 104651320.} \and Lejla Smajlovi\'{c}}
\maketitle
\begin{abstract}\noindent
In this paper we develop the parametrix approach for constructing the heat kernel
on a graph $G$.  In particular, we highlight two specific cases.  First, we consider
the case when $G$ is embedded in a Eulidean domain or manifold $\Omega$, and
we use a heat kernel associated to $\Omega$ to obtain a formula for the heat kernel
on $G$.  Second, we consider when $G$ is a subgraph of a larger graph $\widetilde{G}$,
and we obtain a formula for the heat kernel on $G$ from the heat kernel on $\widetilde{G}$
restricted to $G$.
\end{abstract}

\section{Introduction}

A heat kernel is the fundamental solution to a heat equation in a
domain, manifold or graph, which may be subject to certain boundary conditions.
Heat kernels have proved useful when studying a variety
of problems, from geometry and topology to combinatorics and mathematical
physics; see, for example \cite{Gr09} or \cite{JoLa01} for some general discussion
as to the many manifestations of heat kernel techniques throughout mathematics.
Algebraically, the heat kernel can be viewed as a generating function of
the spectrum of the Laplace operator, in instances when its spectrum is discrete.

In this article we will prove a construction of the heat kernel on graphs analogous
to the parametrix construction in Riemannian geometry, \cite{MP49, Ch84, Ro97}. The parametrix approach plays an important role in geometry and in  varied applications such as machine learning \cite{LL05} and quantum field theory \cite{Av01}.  In brief, this approach to constructing the heat kernel on a Riemannian manifold $M$ begins with an initial approximation which could, for example, come from the Euclidean heat kernel patched together on local charts. This is then refined through an iterative process.  The successive refinements provide an increasingly precise small time asymptotic expansion of the heat kernel.

We will describe the construction of the heat kernel on a finite graph by starting with a general parametrix.  We pay particular attention to situations in which a finite graph $G$ is embedded in an ambient space, which may be a domain in $\R^{n}$,
a manifold or another larger graph.  Then we may take our initial approximation in the parametrix construction to be the heat kernel on the ambient space. This allows us to extend our results to construction of the heat kernel on infinite subgraphs with a finite boundary, see Example \ref{ex: discrete half line} below in which we explicitly compute the heat kernel on a half-line. A similar construction in the special case when $G$ is embedded
in a complete graph is considered in \cite{LNY21}.  In Section \ref{sec: heat kernels on subgraphs} below we relate our method to
the approach in \cite{LNY21} and show how our construction implies one of the main results of \cite{LNY21}.

Let us now be more specific in describing the results of this article.
Let $G$ be a finite, edge-weighted graph with finite vertex set
$VG$ and nonnegative weight function $w:VG\times VG\to [0,\infty), w\mapsto w_{xy}$.  When $w_{xy}=0$ we say
there is no edge between $x$ and $y$. Furthermore, we assume the
symmetry $w_{xy}=w_{yx}$ holds for all $x,y\in VG$.  This data defines
the Laplace operator $\Delta_G$ on functions $f:VG\rightarrow\mathbb{C}$
by
\[
\Delta_{G}f(x)=\sum_{y\in VG}(f(x)-f(y))w_{xy}.
\]
Note that the sum in fact is taken over the neighbors of $x$, that is, over the vertices $y$ with weight $w_{xy}>0$.
The \emph{heat kernel} on the
graph $G$ associated to the weighted graph Laplacian $\Delta_{G,v_{1}}$
acting on functions in the first variable $v_{1}$, is the unique
solution $H_{G}(v_{1},v_{2};t)$
to the differential equation
\[
\left(\Delta_{G,v_{1}}+\frac{\partial}{\partial t}\right)H_{G}(v_{1},v_{2};t)=0
\]
with the property that
\[
\lim_{t\to0}H_{G}(v_{1},v_{2};t)=\left\{ \begin{array}{c}
1\textrm{ if \ensuremath{v_{1}=v_{2}}}\\
0\textrm{ if \ensuremath{v_{1}\neq v_{2}}.}
\end{array}\right.
\]
For the proof of the existence and uniqueness of the heat kernel on
$G$, we refer to \cite{Do84} and \cite{DM06}.  We do not normalize the Laplacian $\Delta_{G}f(x)$ by dividing the sum defining it by the weighted degree $\mu(x)=\sum_{y\in VG}w_{xy}$ of $x$; this normalization appears mostly when discussing stochastic properties of graphs, see e.g. \cite{Wo21} and the extensive bibliography there. However, the parametrix construction of the heat kernel given in our main result Theorem \ref{thm: formula for heat kernel} can be carried out in the same manner for the ``normalized'' Laplacian.

The aim of this paper is to describe a parametrix approach to the
heat kernel on $G$, inspired by the work of Minakshisundaram and
Minakshisundaram and Pleijel, see  \cite{Mi49}, \cite{Mi53}
and \cite{MP49} as well as \cite{Ch84}.
After we give a reasonably general construction, we will highlight
the following two special cases.
\begin{enumerate}
\item When the set $VG$ is a subset of a bounded domain $\Omega$ in $\mathbb{R}^{N}$,
we show that the parametrix of an arbitrary order for the heat kernel
on $G$ can be obtained from the heat kernel on $\Omega$ and
a certain partition of unity depending upon a (Dirichlet-)Vorono\u{\i}
decomposition of $\Omega$ induced by $G$.
\item When $G$ is a finite or infinite subgraph of a
graph $\tilde{G}$, we prove that the heat kernel on $\tilde{G}$
can be used as a parametrix of order zero in the construction of the
heat kernel on $G$, under the technical assumption that the boundary $\partial G$ of $G$ when viewed as a subgraph of $\tilde{G}$ has a finite
number of vertices.
\end{enumerate}

Any new expression for the heat kernel is potentially a source for interesting new identities, particularly in the settings where the heat kernel is unique.
For example, when
studying continuous functions on the continuous circle, one can write the heat kernel
either through its spectral expansion or through the periodization of the
heat kernel on the covering space.  In doing so, one immediately obtains a
classical theta inversion formula, which itself is logically equivalent
to the one-variable Poisson summation formula.  A similar argument in the setting
of compact hyperbolic Riemann surfaces yields a quick proof of the Selberg trace formula;
 see Remark 3.3 of \cite{GJ18}.
In the case of a heat kernel on a discrete circle
with $N$ points, then similar considerations in \cite{KN06} lead to ``heat kernel proofs'' of $I$-Bessel identities. See also \cite{Gr22} and \cite{CHJSV23} for proofs of trigonometric identities using the uniqueness of the discrete-time heat kernel on a discrete circle with $N$ points.

With this in mind, in Example \ref{ex: half line dirichhlet heat} we specialize our general result to the discrete half-line $\Z_{\geq 0}$
with a Dirichlet boundary condition.
Let $I_{x}(t)$ be the $I$-Bessel function and define the convolution
\begin{equation}\label{eq:one_variable_conv}
\left( I_{x}\ast I_{y}\right)(t) = \int\limits_{0}^{t}I_{x}(\tau)I_{y}(t-\tau)d\tau.
\end{equation}
Then the parametrix construction of the heat kernel on $\Z_{\geq 0}$ yields the identity
\begin{equation}\label{eq:I-Bessel_DLine}
I_{x+y}(t)=\sum_{\ell=0}^{\infty}\frac{(-1)^{\ell}}{2^{\ell+1}}\left(I_{1}^{\ast\ell}
\ast I_{x-1}\ast I_{y}\right)(t)
\end{equation}
where $I_{1}^{\ast\ell}$ denotes the $\ell$-fold convolution of $I_{1}$ itself. This identity can be confirmed directly by comparing the Laplace transform of both sides.

The paper is organized as follows. In Section \ref{sec: conv on a graph} we
develop various analytic results which are needed in our study. Section
\ref{sec:The-parametrix-construction} contains the details of the general
parametrix construction. In Section \ref{sec:Graphs-embedded-in} we relate the
heat kernel of a graph embedded in a domain with the heat kernel of the domain. Remark
\ref{rem:Domain_to_graph} is particularly interesting since it gives a precise
formula for the (graph) heat kernel on a graph $G$ which is embedded in a domain
$\Omega$ in terms of the (geometric) heat kernel associated to $\Omega$; see
equation (\ref{eq:heat kernel with k=00003D0}). Section
\ref{sec:Heat-kernels-subgraphs} deals with derivation of expressions for the
heat kernel in a general subgraph case. In Section \ref{sec: heat kernels on subgraphs}
we show that our general result, applied to the setting of the
complete graph on $N$ vertices yields some of the main results from Section 3 of \cite{LNY21}.  We
also give an alternative formulation for those results and explicitly compute
the heat kernel on the graph obtained by removing a single edge from the
complete graph. An example of an infinite subgraph is treated in Section
\ref{sec: half-line}. In Section \ref{sec:Dirichlet-heat-kernel}, we show how to
construct the Dirichlet heat kernel on a subgraph $G$ of a finite or infinite
graph, assuming that boundaries of $G$ and $G\setminus\partial G$ are finite. An
example in this section yields the $I$-Bessel function identity
(\ref{eq:I-Bessel_DLine}).


\section{Convolution on a graph     
\label{sec: conv on a graph}}       

Let $G$ be a finite weighted graph as above with vertex set $VG$ of cardinality $|VG|$. Let
$F_{1},F_{2}:VG\times VG\times\mathbb{R}_{>0}\to\mathbb{R}$. Assume further that
for $i=1,2$ and fixed vertices $v_{1}, v_{2}$, the functions $F_{i}(v_{1},v_{2};t)$ are
integrable as functions of $t$ on every interval $(0,b]$ with $b>0$. The \emph{convolution} of functions $F_{1}$ and $F_{2}$ is
defined to be
\begin{equation}\label{def: convolution}
(F_{1}\ast F_{2})(v_{1},v_{2};t):=\int\limits _{0}^{t}\sum_{v\in VG}F_{1}(v_{1},v,t-r)F_{2}(v,v_{2};r)dr.
\end{equation}
When thinking of $F_{1}$ and $F_{2}$ as operators on $L^{2}(VG)$, or equivalently as $|VG|\times |VG|$ matrices, we may simply write
\begin{equation}
(F_{1}\ast F_{2})(t):=\int\limits _{0}^{t}F_{1}(t-r)\circ F_{2}(r)dr.
\end{equation}

The above convolution is not commutative in general but it is associative.  Specifically,
for any three functions $F_{i}(v_{1},v_{2};t)$,
$i=1,2,3$, $F_{i}:VG\times VG\times\mathbb{R}_{>0}\to\mathbb{R}$
which are integrable as functions of $t$ on every interval $(0,b]$,
$b>0$ for all $v_{1},v_{2}\in VG$, we have
\begin{equation}\label{eq:two_variable_conv}
[(F_{1}\ast F_{2})\ast F_{3}](v_{1},v_{2};t)=[F_{1}\ast(F_{2}\ast F_{3})](v_{1},v_{2};t).
\end{equation}

\begin{rem} There are two notions of convolution which are used in this paper: the two-variable vertex and time convolution \eqref{def: convolution} defined in this section
and a one-variable time convolution as in \eqref{eq:one_variable_conv}.  The latter is used only in Example \ref{ex: half line dirichhlet heat} of Section \ref{sec:Dirichlet-heat-kernel} and we hope no confusion will arise from using the same symbol $\ast$ to denote both types of convolution. \end{rem}

We have the following lemma.
\begin{lem}
\label{lem:convolution bounds} Let $F_{1},F_{2}:VG\times VG\times\mathbb{R}_{>0}\to\mathbb{R}$
be as above.  For some $t_{0} > 0$, assume there exist constants $C_1, C_2$ and integers $k,\ell\geq 0$
such that for all $0<t<t_{0}$ and $v_1,v_2 \in VG$, we have
$$
\left|F_{1}(v_{1},v_{2};t)\right|\leq C_{1}t^{k}
  \,\,\,\,\,
  \text{\rm and}
  \,\,\,\,\,
  \left|F_{2}(v_{1},v_{2};t)\right|\leq C_{2}t^{\ell}.
$$
Then, for all $v_{1},v_{2}\in VG$
we have
\[
  |(F_{1}\ast F_{2})(v_{1},v_{2};t)|\leq C_{1}C_{2}|VG|\frac{k!\ell!}{(k+\ell+1)!}
  t^{k+\ell+1}
  \,\,\,\,\,
  \text{\rm for $0<t<t_{0}$.}
\]
In particular, if $k=0$ then
$$
|(F_1\ast F_2)(v_1,v_2;t)|\leq C_1C_2|VG|\frac{t^{\ell+1}}{\ell+1}
\,\,\,\,\,
\text{\rm for all $v_{1},v_{2}\in VG$ and $0<t<t_{0}$.}
$$
\end{lem}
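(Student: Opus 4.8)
The plan is to bound the convolution directly via the triangle inequality, insert the pointwise hypotheses on $F_1$ and $F_2$ into the integrand, and reduce the whole estimate to a single elementary Beta-function integral.

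First I would fix $v_1,v_2\in VG$ and $0<t<t_0$ and write
\[
|(F_1\ast F_2)(v_1,v_2;t)| \;\leq\; \int_0^t \sum_{v\in VG} |F_1(v_1,v;t-r)|\,|F_2(v,v_2;r)|\,dr .
\]
This step is unproblematic: the sum over $v\in VG$ is finite, and by hypothesis each $F_i(v_1,v_2;\cdot)$ is integrable on $(0,b]$ for every $b>0$, so there are no convergence or measurability issues. The one point worth noting is that for $0<r<t<t_0$ both $r$ and $t-r$ lie in the interval $(0,t_0)$, which is precisely the range on which the assumed bounds $|F_1|\leq C_1\tau^k$ and $|F_2|\leq C_2\tau^\ell$ are valid; so both factors in the integrand may be estimated simultaneously.

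Next I would substitute $|F_1(v_1,v;t-r)|\leq C_1(t-r)^k$ and $|F_2(v,v_2;r)|\leq C_2 r^\ell$ and carry out the (now trivial) sum over the $|VG|$ vertices $v$, obtaining
\[
|(F_1\ast F_2)(v_1,v_2;t)| \;\leq\; C_1 C_2 |VG| \int_0^t (t-r)^k r^\ell\,dr .
\]
The change of variables $r = ts$ then gives $\int_0^t (t-r)^k r^\ell\,dr = t^{k+\ell+1}\int_0^1 (1-s)^k s^\ell\,ds = t^{k+\ell+1} B(k+1,\ell+1) = t^{k+\ell+1}\,k!\,\ell!/(k+\ell+1)!$, which is the claimed estimate. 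The special case $k=0$ follows by specializing the factor to $B(1,\ell+1) = 1/(\ell+1)$.

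There is essentially no obstacle here; the proof is a direct estimate. The only things to be attentive to are (i) that the hypotheses on $F_1,F_2$ need only hold on $(0,t_0)$, which matches the range of the variables appearing under the integral sign when $0<t<t_0$, and (ii) that the vertex summation, being over the finite set $VG$, contributes exactly the combinatorial factor $|VG|$ and nothing more.
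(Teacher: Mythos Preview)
Your argument is correct and is exactly the approach the paper takes: bound the convolution by the triangle inequality, insert the pointwise hypotheses, sum over the $|VG|$ vertices, and evaluate the remaining integral $\int_0^t (t-r)^k r^\ell\,dr$ via the Euler Beta function. The paper's proof simply states this in one line, so your version is just a more detailed rendering of the same computation.
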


\begin{proof}
The proof is immediate from the definition of the convolution and the
formula for the  Euler beta function, namely that
$$
\int_0^t r^k (t-r)^\ell\, dr=\frac{k!\ell!t^{k+\ell+1}}{(k+\ell+1)!}.
$$
\end{proof}

For any positive integer $\ell$ and any function $f=f(v_{1},v_{2};t):VG\times VG\times\mathbb{R}_{>0}\to\mathbb{R}$
which is integrable in $t$ on every interval $(0,b]$, $b>0$ for
all $v_{1},v_{2}\in VG$, we can inductively define the $\ell$-fold
convolution $(f)^{\ast\ell}(v_{1},v_{2};t)$ by setting $(f)^{\ast1}(v_{1},v_{2};t)=f(v_{1},v_{2};t)$
and, for $\ell\geq2$ we put
$$
(f)^{\ast\ell}(v_{1},v_{2};t):=\left((f)^{\ast(\ell-1)}\ast f\right)(v_{1},v_{2};t).
$$
Note that in the last definition the order is not important because
the convolution operator is associative.

With this notation we have the following lemma.

\begin{lem}
\label{lem:convergence of the series} Let $f=f(v_{1},v_{2};t):VG\times VG\times\mathbb{R}_{>0}\to\mathbb{R}$.
Assume that for all $v_{1},v_{2}\in VG$ and $t_{0} \in \mathbb{R}_{>0}$, the function $f(v_{1},v_{2};\cdot)$ is integral on
the interval $(0,t_{0}]$.
Assume further there exists a constant $C$ and integer $k\geq 0$ such that
$$
f(v_{1},v_{2};t)\leq Ct^{k}
\,\,\,\,\,
\text{\rm for all $v_{1},v_{2}\in VG$ and $0<t<t_{0}$.}
$$
Then the series
\begin{equation}\label{eq: series over ell}
\sum_{\ell=1}^{\infty}(-1)^{\ell}(f)^{\ast\ell}(v_{1},v_{2};t)
\end{equation}
converges absolutely and uniformly on every compact subset of $VG\times VG\times\mathbb{R}_{>0}$.
In addition, we have that
\begin{equation}
\left(f\ast\sum_{\ell=1}^{\infty}(-1)^{\ell}(f)^{\ast\ell}\right)(v_{1},v_{2};t)=
\sum_{\ell=1}^{\infty}(-1)^{\ell}(f)^{\ast(\ell+1)}(v_{1},v_{2};t).\label{eq: convolution with inf sum}
\end{equation}
and,
\begin{equation}\label{eq:series_bound}
\sum_{\ell=1}^{\infty}\left|(f)^{\ast\ell}(v_{1},v_{2};t)\right|
=O(t^{k})
\,\,\,\,\,
\text{\rm as $t\to 0$.}
\end{equation}
\end{lem}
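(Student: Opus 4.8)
The plan is to iterate Lemma \ref{lem:convolution bounds} to control the size of each $\ell$-fold convolution, and then sum a convergent series. First I would show by induction on $\ell$ that there is a constant, call it $A$, depending only on $C$, $k$, and $|VG|$, such that
\[
\left|(f)^{\ast\ell}(v_{1},v_{2};t)\right|\leq C\frac{A^{\ell-1}k!^{\ell}}{(\ell k+\ell-1)!}\,t^{\ell k+\ell-1}
\qquad\text{for all }v_{1},v_{2}\in VG,\ 0<t<t_{0},
\]
the base case $\ell=1$ being the hypothesis. For the inductive step I would apply Lemma \ref{lem:convolution bounds} to $F_{1}=(f)^{\ast\ell}$ (which by hypothesis is bounded by a constant times $t^{\ell k+\ell-1}$) and $F_{2}=f$ (bounded by $Ct^{k}$): the lemma produces the factor $|VG|\,(\ell k+\ell-1)!\,k!/(\ell k+\ell+k)!$, and one checks the exponents and factorials telescope into the claimed form with $A=C|VG|$. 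The precise constant in the exponent is immaterial; what matters is that the factorial in the denominator grows superexponentially in $\ell$ while the numerator only grows exponentially (for fixed $t<t_{0}$, the power $t^{\ell k+\ell-1}$ is bounded by a geometric factor). Hence $\sum_{\ell\geq 1}\sup_{0<t\leq b}|(f)^{\ast\ell}(v_{1},v_{2};t)|<\infty$ for every $b\in(0,t_{0})$, and since $VG$ is finite this gives absolute and uniform convergence of the series \eqref{eq: series over ell} on every compact subset of $VG\times VG\times\mathbb{R}_{>0}$ contained in the time-slab $t\leq b<t_{0}$; for compact subsets with larger time values one either enlarges $t_{0}$ trivially (the bound $f\leq Ct^{k}$ can be replaced by any local $L^{1}$ bound on $[b,b']$, which still feeds through Lemma \ref{lem:convolution bounds}) or remarks that on a fixed bounded time interval the same Beta-function estimate applies with $t_{0}$ replaced by the right endpoint.

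Next, to prove \eqref{eq: convolution with inf sum}, I would use the uniform convergence just established together with associativity \eqref{eq:two_variable_conv}. Writing $S_{N}=\sum_{\ell=1}^{N}(-1)^{\ell}(f)^{\ast\ell}$, associativity gives $f\ast S_{N}=\sum_{\ell=1}^{N}(-1)^{\ell}(f)^{\ast(\ell+1)}=\sum_{\ell=2}^{N+1}(-1)^{\ell-1}(f)^{\ast\ell}$, and then I would justify passing to the limit $N\to\infty$ under the integral sign defining the convolution $f\ast(-)$: the partial sums $S_{N}$ converge uniformly on compact sets, the inner sum over $v\in VG$ is finite, and $f(v_{1},v,\cdot)$ is integrable on $(0,t]$, so dominated convergence (with dominating function a constant times the integrable bound on $|f|$ times the uniform bound on $\sup_{N}|S_{N}|$) lets the limit pass through $\int_{0}^{t}\sum_{v}$. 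This yields $f\ast\sum_{\ell\geq 1}(-1)^{\ell}(f)^{\ast\ell}=\sum_{\ell\geq 1}(-1)^{\ell}(f)^{\ast(\ell+1)}$, which is \eqref{eq: convolution with inf sum} after reindexing sign conventions match on both displayed sides as written.

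Finally, for the asymptotic \eqref{eq:series_bound}, I would simply sum the per-term bound from the first step: for $0<t<t_{0}$,
\[
\sum_{\ell=1}^{\infty}\left|(f)^{\ast\ell}(v_{1},v_{2};t)\right|\leq C t^{k}\sum_{\ell=1}^{\infty}\frac{A^{\ell-1}k!^{\ell}\,t^{(\ell-1)(k+1)}}{(\ell k+\ell-1)!},
\]
and the series in $t$ on the right is an entire function of $t$ (again by the superexponential denominator), hence bounded on any interval $[0,t_{1}]$ with $t_{1}<t_{0}$; its value at $t=0$ is the $\ell=1$ term, which is $1$, so the whole sum is $O(t^{k})$ as $t\to 0$. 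The main obstacle, such as it is, is bookkeeping: getting the induction in the first step to close cleanly. The temptation is to carry the exact multinomial factor $k!\ell!/(k+\ell+1)!$ from Lemma \ref{lem:convolution bounds} with the ``running'' exponent, and the arithmetic of which factorials appear where is easy to bungle; it is cleaner to first observe that for any fixed $b<t_{0}$ one may as well replace the hypothesis by the weaker uniform bound $|f(v_{1},v_{2};t)|\leq C b^{k}$ on $(0,b]$ when $k\geq 1$ (or keep $t^{k}$ only to get the sharp $O(t^{k})$ in \eqref{eq:series_bound}), reducing everything to the $k=0$ case of Lemma \ref{lem:convolution bounds} where the recursion $a_{\ell+1}\leq C|VG|\,a_{\ell}/\ell$ is transparent and gives $a_{\ell}\leq (C|VG|)^{\ell-1}a_{1}/(\ell-1)!$ — manifestly summable — after which one reinstates the $t$-powers to recover the stated refinement.
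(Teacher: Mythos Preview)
Your proposal is correct and follows essentially the same approach as the paper: iterate Lemma~\ref{lem:convolution bounds} to obtain the bound $|(f)^{\ast\ell}(v_{1},v_{2};t)|\leq (Ck!)^{\ell}|VG|^{\ell-1}\,t^{\ell k+\ell-1}/(\ell k+\ell-1)!$ (which is exactly your bound with $A=C|VG|$), invoke Weierstrass for convergence, justify termwise integration for \eqref{eq: convolution with inf sum}, and read off \eqref{eq:series_bound} by factoring out $t^{k}$. Your discussion is somewhat more careful than the paper's about the role of $t_{0}$ and the passage to the limit under the convolution integral, but the substance is the same.
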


\begin{proof}
With the stated assumptions, together with Lemma \ref{lem:convolution bounds}, we have that
\begin{equation}
  \label{eq:4}
\left|(f\ast f)(v_1,v_2;t)\right| \leq C^2|VG|(k!)^2\frac{t^{2k+1}}{(2k+1)!}.
\end{equation}
Similarly, by induction for $\ell\geq 1$ we have the bound that
\begin{equation}
  \label{eq:mult_convolution_bound}
\left|(f^{\ast(\ell)})(v_1,v_2;t)\right|
\leq
(Ck!)^\ell|VG|^{\ell-1}\frac{t^{\ell k+\ell-1}}{(\ell k+\ell-1)!}.
\end{equation}
The assertion regarding the convergence of (\ref{eq: series over ell}) now follows from the Weierstrass criterion.

Fix $t>0$. By reasoning
as above, it is easy to deduce that the series on the right-hand side of \eqref{eq: convolution with inf sum} converges absolutely. Therefore, we may integrate the equation
$$
\left(f\ast \sum_{\ell=1}^{\infty} (-1)^\ell (f)^{\ast \ell}\right)(v_1,v_2;t)=\int\limits_0^t \sum_{v\in VG} \left( f(v_1,v,t-r) \sum_{\ell=1}^{\infty} (-1)^\ell (f)^{\ast \ell}(v,v_2;r) \right) dr
$$
termwise to establish \eqref{eq: convolution with inf sum}.  Finally, the assertion in
\eqref{eq:series_bound} follows by combining \eqref{eq: series over ell},
\eqref{eq:mult_convolution_bound},  and Lemma \ref{lem:convolution bounds}.

\end{proof}

\section{The parametrix construction of the heat         
  kernel on $G$\label{sec:The-parametrix-construction}}  

We repeat and expand on some notation from the introduction.
Let $G$ be a finite weighted graph with vertex set $VG$ and edge weight function $w_{xy}$ for vertices $x,y.$
The Laplace operator $\Delta_G$ acting on functions $f:VG\rightarrow\mathbb{C}$
is defined by
\begin{equation}\label{eq:Laplacian}
\Delta_{G}f(x)=\sum_{y\in VG}(f(x)-f(y))w_{xy}.
\end{equation}
and the heat operator $L$ on the graph is defined by
\begin{equation}
  \label{def:heat_operator}
  L=\Delta_{G}+\frac{\partial}{\partial t}.
\end{equation}
The heat kernel $H_{G}$ on $G$ associated to the Laplacian $\Delta_G$ is the unique solution
$H_G:VG\times VG\times [0,\infty)$ to the differential equation
\[
L_{v_1}H_{G}(v_{1},v_{2};t)=0
\]
with the property that
$$
H_{G}(v_{1},v_{2};0)=\left\{ \begin{array}{c}
1\,\,\,\textrm{ if \ensuremath{v_{1}=v_{2}}}\\
0\,\,\,\textrm{ if \ensuremath{v_{1}\neq v_{2}}.}
\end{array}\right.
$$
The subscript $v_{1}$ on $L_{v_1}$ indicates the sum associated to the Laplacian, as in \eqref{eq:Laplacian} is
over neighbors of $v_1$ which is  the first space variable.
A \emph{parametrix} for the heat operator on $G$ is any continuous function $H:VG\times VG\times [0,\infty)$
which is smooth on $VG\times VG\times (0,\infty)$ and satisfies the following two properties.
\begin{enumerate}
\item For all $v_{1},v_{2}\in VG$,
\begin{equation}
H(v_{1},v_{2};0) = \lim\limits_{t \rightarrow 0}
H(v_{1},v_{2};t)=\left\{ \begin{array}{c}
1\,\,\, \textrm{ if \ensuremath{v_{1}=v_{2}}}\\
0\,\,\, \textrm{ if \ensuremath{v_{1}\neq v_{2}}}
\end{array}\right.\label{eq:dirac property of heat kernel}
\end{equation}
\item The function $L_{v_1}H(v_1, v_2;t)$ extends to a continuous function on  $VG\times VG\times [0,\infty).$
\end{enumerate}
We say the parametrix $H$ is of order $k$ if for some integer $k \geq 0,$
$$
|L_{v_1}H(v_1, v_2;t)| = O(t^k)
\,\,\,\,\,
\text{\rm as $t \to 0$.}
$$

\begin{lem}
\label{lem: L of convol} Let $H$ be a parametrix for the heat operator on $G$.
Let $f=f(v_{1},v_{2};t):VG\times VG\times\mathbb{R}_{> 0}\to\mathbb{R}$
be continuous in $t$  and bounded for all $v_{1},v_{2}\in VG$. Then
$$
L_{v_{1}}(H\ast f)(v_{1},v_{2};t)=f(v_{1},v_{2};t)+(L_{v_{1}}H\ast f)(v_{1},v_{2};t)
$$
for all $v_{1},v_{2}\in VG$ and $t\in\mathbb{R}_{>0}$.
\end{lem}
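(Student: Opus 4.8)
The plan is to compute $L_{v_1}(H\ast f)(v_1,v_2;t)$ directly from the definition of the heat operator $L=\Delta_G+\partial/\partial t$, splitting the computation into the time-derivative part and the $\Delta_{G,v_1}$ part. Writing out the convolution,
\[
(H\ast f)(v_1,v_2;t)=\int_0^t\sum_{v\in VG}H(v_1,v,t-r)f(v,v_2;r)\,dr,
\]
I would first differentiate in $t$. Here one applies the Leibniz rule for differentiating an integral whose integrand and upper limit both depend on $t$: the boundary term at $r=t$ produces $\sum_{v\in VG}H(v_1,v,0)f(v,v_2;t)$, which by the Dirac property \eqref{eq:dirac property of heat kernel} collapses to $f(v_1,v_2;t)$, while the interior term gives $\int_0^t\sum_{v}\frac{\partial}{\partial t}H(v_1,v,t-r)f(v,v_2;r)\,dr$. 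The spatial part is easier: since $\Delta_{G,v_1}$ is a finite linear combination of evaluations in $v_1$ (with coefficients the weights $w_{v_1 y}$), it passes through both the integral and the finite sum over $v\in VG$, yielding $\int_0^t\sum_v (\Delta_{G,v_1}H)(v_1,v,t-r)f(v,v_2;r)\,dr$.

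Adding the two pieces, the interior terms combine into $\int_0^t\sum_v (L_{v_1}H)(v_1,v,t-r)f(v,v_2;r)\,dr$, which is exactly $(L_{v_1}H\ast f)(v_1,v_2;t)$, and the boundary term is $f(v_1,v_2;t)$. This gives the claimed identity. The main technical point — and the step I expect to require the most care — is justifying differentiation under the integral sign and the evaluation of the boundary term: this needs the continuity of $H$ up to $t=0$ (built into the definition of a parametrix), the continuity of $L_{v_1}H$ on $VG\times VG\times[0,\infty)$ (property (2) of a parametrix), and the hypothesis that $f$ is continuous in $t$ and bounded. A clean way to handle the Leibniz step rigorously is the change of variables $r\mapsto t-r$ to move the $t$-dependence off the limits:
\[
(H\ast f)(v_1,v_2;t)=\int_0^t\sum_{v\in VG}H(v_1,v,s)f(v,v_2;t-s)\,ds,
\]
after which the upper limit still depends on $t$ but the integrand's $t$-dependence sits in the smoother factor; alternatively one symmetrizes and differentiates both forms. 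Since $VG$ is finite, all sums are finite and there are no convergence issues in the spatial variable, so uniform integrability on $(0,t]$ together with the stated regularity hypotheses suffices to push the derivative inside. Once differentiation under the integral is licensed, the rest is the bookkeeping described above.
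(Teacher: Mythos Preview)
Your proposal is correct and follows essentially the same argument as the paper: split $L_{v_1}$ into $\partial_t$ and $\Delta_{G,v_1}$, apply the Leibniz integration formula to the time derivative so that the boundary term at $r=t$ yields $f(v_1,v_2;t)$ via the Dirac property of $H$, pass $\Delta_{G,v_1}$ inside the convolution, and recombine. Your additional remarks on justifying differentiation under the integral sign are more detailed than what the paper provides, but the underlying proof is the same.
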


\begin{proof}
  For any $t>0$ we have
\begin{equation}\label{eq:heat_of_convolution}
L_{v_{1}}(H\ast f)=
    \frac{\partial}{\partial t}(H\ast f)
    + \Delta_{G,v_{1}}(H\ast f)=
  \frac{\partial}{\partial t}(H\ast f)+   (\Delta_{G,v_{1}}H)\ast f.
\end{equation}
By the Leibniz integration formula, the first term on the right hand side of \eqref{eq:heat_of_convolution} is equal to
\begin{align*}
\frac{\partial}{\partial t}&\int\limits _{0}^{t}\sum_{v\in VG}H(v_{1},v;t-r)f(v,v_{2};r)dr\\&
=\sum_{v\in VG}H(v_{1},v;0)f(v,v_{2};t)+\int\limits _{0}^{t}\sum_{v\in VG}
\frac{\partial}{\partial t}H(v_{1},v;t-r)f(v,v_{2};r)dr\\&
=f(v_{1},v_{2};t)+(\frac{\partial}{\partial t}H\ast f)(v_1,v_2;t).
\end{align*}
Therefore,
\begin{align*}
L_{v_{1}}(H\ast f)(v_{1},v_{2};t) &=
  \frac{\partial}{\partial t}(H\ast f)(v_{1},v_{2};t)+   (\Delta_{G,v_{1}}H\ast f)(v_{1},v_{2};t)
\\&=f(v_{1},v_{2};t)+(\frac{\partial}{\partial t}H\ast f)(v_1,v_2;t)+(\Delta_{G,v_{1}}H\ast f)(v_{1},v_{2};t)
\\&=f(v_{1},v_{2};t)+(L_{v_{1}}H\ast f)(v_{1},v_{2};t),
\end{align*}
as claimed.
\end{proof}

With all this, we now can state the main theorem in this section
\begin{thm}
\label{thm: formula for heat kernel} Let $H$ be a parametrix of
order $k\geq0$.  For $v_1,v_2 \in VG$ and $t\in \mathbb{R}_{\geq 0}$ define
\begin{equation}\label{eq:Neuman_series}
F(v_{1},v_{2};t):=\sum_{\ell=1}^{\infty}(-1)^{\ell}
(L_{v_{1}}H)^{\ast\ell}(v_{1},v_{2};t).
\end{equation}
Then the series \eqref{eq:Neuman_series}
converges absolutely and uniformly on every compact subset of $VG\times VG\times\mathbb{R}_{\geq 0}$.
Furthermore, the heat kernel $H_{G}$ on $G$
associated to graph Laplacian $\Delta_{G,v_{1}}$ is given by
\begin{equation}\label{eq:heat_kernel_parametrix}
H_{G}(v_{1},v_{2};t)=H(v_{1},v_{2};t)+(H\ast F)(v_{1},v_{2};t)
\end{equation}
with
$$
(H\ast F)(v_{1},v_{2};t)
= O(t^{k+1})
\,\,\,\,\,
\text{\rm as $t \rightarrow 0$.}
$$
\end{thm}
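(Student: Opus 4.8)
The plan is to verify that the function $H_G$ defined by \eqref{eq:heat_kernel_parametrix} satisfies both defining properties of the heat kernel, and then to invoke the uniqueness statement cited from \cite{Do84, DM06}. First I would dispense with the convergence claim: since $H$ is a parametrix of order $k$, we have $|L_{v_1}H(v_1,v_2;t)| = O(t^k)$ as $t\to 0$, and $L_{v_1}H$ is continuous (hence bounded) on $VG\times VG\times[0,t_0]$, so the hypotheses of Lemma \ref{lem:convergence of the series} apply with $f = L_{v_1}H$. This gives absolute and uniform convergence of $F$ on compact subsets of $VG\times VG\times\mathbb{R}_{>0}$, and the bound \eqref{eq:series_bound} yields $F(v_1,v_2;t) = O(t^k)$ as $t\to 0$; continuity at $t=0$ then extends $F$ to $VG\times VG\times\mathbb{R}_{\geq 0}$.

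Next I would check the initial condition. By Lemma \ref{lem:convolution bounds} (the case $k=0$ applied to the first factor $H$, or rather a direct estimate using boundedness of $H$ near $t=0$ and the $O(t^k)$ bound on $F$), the term $(H\ast F)(v_1,v_2;t)$ is $O(t^{k+1})$, hence in particular $O(t)$, so it vanishes as $t\to 0$. Therefore $\lim_{t\to 0}H_G(v_1,v_2;t) = \lim_{t\to 0}H(v_1,v_2;t)$, which equals the Kronecker delta by property \eqref{eq:dirac property of heat kernel} of the parametrix. This simultaneously establishes the claimed $O(t^{k+1})$ decay of $H\ast F$.

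The heart of the argument is verifying $L_{v_1}H_G = 0$. Applying Lemma \ref{lem: L of convol} with $f = F$ (which is continuous in $t$ and bounded, as just shown), I get
\[
L_{v_1}H_G = L_{v_1}H + L_{v_1}(H\ast F) = L_{v_1}H + F + (L_{v_1}H)\ast F.
\]
Now substitute the definition of $F$: writing $g := L_{v_1}H$, we have $F = \sum_{\ell\geq 1}(-1)^\ell g^{\ast\ell}$, so $F + g\ast F = F + g\ast F$. By \eqref{eq: convolution with inf sum} of Lemma \ref{lem:convergence of the series}, $g\ast F = \sum_{\ell\geq 1}(-1)^\ell g^{\ast(\ell+1)} = -\sum_{\ell\geq 2}(-1)^\ell g^{\ast\ell} = -(F + g)$ — more precisely $g \ast F = \sum_{m\geq 2}(-1)^{m-1}g^{\ast m} = -\sum_{m\geq 2}(-1)^m g^{\ast m} = -(F - (-1)g^{\ast 1}) = -F - g$. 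Hence $L_{v_1}H + F + g\ast F = g + F + (-F - g) = 0$, as desired. The telescoping/index-shift bookkeeping in this last step is the only real subtlety, and it is precisely what \eqref{eq: convolution with inf sum} was set up to handle; I would present it carefully, perhaps absorbing the $\ell=1$ term $-g = -L_{v_1}H$ explicitly to make the cancellation transparent.

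Finally, having shown that $H_G$ is continuous on $VG\times VG\times[0,\infty)$, smooth in $t$ on $(0,\infty)$, satisfies $L_{v_1}H_G = 0$, and has the correct initial values, I conclude by uniqueness of the heat kernel on a finite graph that $H_G$ is indeed \emph{the} heat kernel associated to $\Delta_{G,v_1}$. The main obstacle, such as it is, is not any single hard estimate but rather making sure the termwise manipulations of the infinite convolution series are justified — this is already packaged in Lemma \ref{lem:convergence of the series}, so the proof is essentially an assembly of the preceding lemmas in the right order.
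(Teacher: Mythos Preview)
Your proof is correct and follows essentially the same approach as the paper: invoke Lemma~\ref{lem:convergence of the series} for convergence and the $O(t^k)$ bound on $F$, use Lemma~\ref{lem:convolution bounds} to get $(H\ast F)=O(t^{k+1})$ and hence the initial condition, apply Lemma~\ref{lem: L of convol} together with the index-shift identity \eqref{eq: convolution with inf sum} to obtain the telescoping cancellation $L_{v_1}H_G=0$, and conclude by uniqueness. The only cosmetic difference is that the paper displays the telescoping as two adjacent sums that visibly cancel, whereas you rewrite $g\ast F=-F-g$ algebraically; both are the same computation.
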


\begin{proof}
Set
$$
\tilde{H}(v_{1},v_{2};t) := H(v_{1},v_{2};t)+(H\ast F)(v_{1},v_{2};t).
$$
From the characterizing properties of the heat kernel$$
\lim_{t\to 0}\widetilde{H}(v_{1},v_{2};t)=\lim_{t\to 0}H(v_{1},v_{2};t)=\delta_{v_{1}=v_{2}}
,$$ it suffices to show
that
\begin{equation}\label{eq:heat_kernel_criteria}
L_{v_{1}}\tilde{H}(v_{1},v_{2};t)=0
\,\,\,\,\,
\text{\rm and}
\,\,\,\,\,
\lim_{t\to 0} \tilde{H}(v_{1},v_{2},t)=\delta_{v_{1}=v_{2}}.
\end{equation}
By Lemma \ref{lem:convergence of the series}, the series $F(v_1,v_2;t)$ defined in \eqref{eq:Neuman_series}
converges uniformly and absolutely and has order $O(t^{k})$ as $t \rightarrow 0$.  Since $H$ is bounded on any finite interval, Lemma \ref{lem:convolution bounds} then yields the asymptotic
$$
(H\ast F)(v_{1},v_{2};t)= O(t^{k+1})
\,\,\,\,\,
\text{\rm as $t \rightarrow 0$.}
$$
In particular,
$$
\lim_{t\to 0}\widetilde{H}(v_{1},v_{2};t)=\lim_{t\to 0}H(v_{1},v_{2};t)=\delta_{v_{1}=v_{2}}.
$$

It remains to prove the vanishing of $L_{v_1}\tilde{H}$ in \eqref{eq:heat_kernel_criteria}.
For this, we can apply Lemma \ref{lem: L of convol} to get that
\begin{align*}
L_{v_{1}}\tilde{H}(v_{1},v_{2};t)&=
L_{v_{1}}(H)(v_{1},v_{2};t)+L_{v_{1}}(H\ast F)(v_{1},v_{2};t)
\\&=
L_{v_{1}}(H)(v_{1},v_{2};t)+\sum_{\ell=1}^{\infty}(-1)^{\ell}(L_{v_{1}}
{H})^{\ast\ell}(v_{1},v_{2};t)\\&\,\,\,\,\,\,
+(L_{v_{1}}{H})*\left(\sum_{\ell=1}^{\infty}(-1)^{\ell}(L_{v_{1}}{H})^{\ast(\ell)}\right)(v_{1},v_{2};t)\\&=
L_{v_{1}}(H)(v_{1},v_{2};t)+\sum_{\ell=1}^{\infty}(-1)^{\ell}(L_{v_{1}}
{H})^{\ast\ell}(v_{1},v_{2};t)\\&\,\,\,\,\,\,
+\sum_{\ell=1}^{\infty}(-1)^{\ell}(L_{v_{1}}{H})^{\ast(\ell+1)}(v_{1},v_{2};t)\\&
=0,
\end{align*}
where we used absolute convergence of the series defining $F(v_{1},v_{2};t)$
to change the order of summation.  This completes the proof.
\end{proof}

\begin{rem}\rm
In \eqref{eq:heat_kernel_parametrix}, the function $H(v_{1},v_{2};t)$ is a
parametrix, so it is required to satisfy the reasonably weak conditions given in its definition.  In particular, one does not use any information about the edge structure
associated to the graph.  One incorporates the finer information regarding
the edge data through the Laplacian, which is used to form the Neumann series
\eqref{eq:Neuman_series}.  In this regard, the parametrix construction of the
heat kernel on a graph, as given in Theorem \ref{thm: formula for heat kernel}
is easier than the parametrix construction for the heat kernel in the setting
of Riemannian geometry (see, for example, Chapter IV of \cite{Ch84}, page 149) where
the Laplacian is necessary in the construction of the parametrix.
\end{rem}

\section{Graphs embedded in a domain\label{sec:Graphs-embedded-in}}

Let us assume that a graph $G$ is embedded in a Riemannian domain $\Omega$.
In this section, we will show one way in which the Riemannian heat kernel
on $\Omega$ can be used to construct the graph heat kernel on $G$.  In effect,
we define a parametrix on the graph by computing an average of the heat
kernel from $\Omega$ over neighborhoods of the vertices of $G$.  We then
employ Theorem \ref{thm: formula for heat kernel}.  Certainly, there are other
means by which one could define a parametrix.  However, we find it appealing
to develop a direct connection between the Riemannian heat kernel on $\Omega$
and the graph heat kernel on $G$.

Let $\Omega\subset\mathbb{R}^{N}$ be a bounded domain with $C^{\infty}$
boundary $\partial\Omega$.  The Laplacian on $\Omega$ is defined
through its action on twice continuously differentiable functions
$f$ on $\Omega$ as
\[
\Delta_{\Omega}f(x)=-\sum_{i=1}^{N}\frac{\partial^{2}}{\partial x_{i}^{2}}f(x).
\]
Let $K_{\Omega}(x,y;t)$ denote a heat kernel on $\Omega$,
meaning the fundamental solution to the equation
$$
(\Delta_{\Omega}+\partial_{t})K(x,y;t)=0
$$
on $\Omega\times\Omega\times(0,\infty)$ subject to the initial condition
$$
\lim_{t\to 0}K(x,y;t)=\delta_{x},
$$
 where $\delta_{x}$ is
the Dirac delta mass concentrated at $x$.  For definiteness we assume that $K_{\Omega}(x,y;t)$
satisfies a Dirichlet boundary condition, but this will play no role in our construction and any reasonable alternative boundary condition may be assumed.   
We assume that $\Omega$ is fixed and will suppress
the subscript $\Omega$ and denote a the heat kernel on $\Omega$ simply by $K(x,y;t)$.
Heat kernels with different boundary condition are, of
course, different; however, for our construction we can take any one.

Let $G$ be a graph embedded in $\Omega$, in the sense that $VG\subset\Omega$.
Let $d$ denote the Euclidean distance on $\Omega$ and let $m$ be
the Lebesgue measure also denoted $dx$ below.
For any vertex $v$ in $G$ we let
\[
N(v)=\left\{ x\in\Omega:d(x,v)<d(x,v')\textrm{ for all \ensuremath{v'\in VG}}\right\}
\]
be the (Dirichlet-) Vorono\u{\i} decomposition of $\Omega$ induced
by $VG$. In words, $N(v)$ is the set of points in $\Omega$ closer to $v$ than any other vertex in $VG$.  The collection of sets
$\{N(v):v\in VG\}$ are disjoint, by construction,
and non-empty since $v\in N(v)$. Each set is open, thus measurable.
We assume that $VG$ is embedded in $\Omega$ in such a way that all
sets $N(v)$ in the Vorono\u{\i} decomposition are path-connected.
We denote by $\mu_{v}$ the measure of $N(v)$. For example, if $\Omega$
is convex, then each set $N(v)$ is open and convex.

The set
$$
H(v,v'):=\{x\in\mathbb{R}^{N}:d(x,v)=d(x,v'): v, v'\in VG\}
$$
is a union of hyperplanes in $\mathbb{R}^{N}$, hence a measure zero set. The
set $\Omega\setminus\bigcup_{v\in VG}N(v)$ is a subset of $H(v,v')$, hence is also a
set of measure zero.

For any $\delta>0$ and $v\in VG$, define
the set
$$
N_{\delta}(v):=\{x\in N(v):d(x,\partial N(v))<\delta\},
$$
where $\partial N(v)$ stands for the boundary of $N(v)$.
For an arbitrary and fixed $\epsilon>0$, we choose $\delta>0$ which
depends on $\epsilon$ and $G$ such that
$$
N(v)\setminus N_{\delta}(v)\neq\emptyset
\,\,\,\,\,
\text{\rm and}
\,\,\,\,\,
m(N_{\delta}(v))<\epsilon/|VG|
\,\,\,
\text{\rm for all $v\in VG$.}
$$
For such a fixed $\epsilon>0$, let us introduce a family of $C^{\infty}$-functions
$\eta_{v}:\Omega\to[0,\infty)$, indexed by $v\in VG$, which satisfy the following
properties.

\begin{itemize}
\item[(a)] For all $v\in VG$ one has $\textrm{supp}(\eta_{v})\subseteq N(v)$,
meaning $\eta_{v}(x) = 0$ whenever $x \notin N(v)$.
\item[(b)] We have that $\eta_{v}(x)\equiv 1$ for all $x\in N(v)\setminus N_{\delta}(v)$  and
$\eta_{v}(x)\equiv 0$ for all $x\in N_{\delta/2}(v)$.
\item[(c)] For all $v \in VG$, we have that
$$
\int\limits _{\Omega}\eta_{v}(x)^{2}=\mu_{v}=m(N(v)).
$$
\end{itemize}
In words, the function $\eta_{v}$ is equal to one on $N(v)\setminus N_{\delta}(v)$, equal
to zero outside $N(v)\setminus N_{\delta/2}(v)$, increases from one and then decrease to
zero on $N_{\delta}(v)\setminus N_{\delta/2}(v)$ so that one has the integral condition (c).
It is an elementary exercise in real analysis to construct such a family of functions.  We note for later use the relation
\begin{equation}
  \label{eq:CS}
\int_\Omega \eta_v(x)\, dx\leq \mu_v
\end{equation}
which follows immediately from (a) and (c) above, together with the Cauchy-Schwartz inequality.

With this notation, we can use a heat kernel
$K(x,y;t)$ on $\Omega$ to construct a parametrix of any order
$k \geq0$ for the heat kernel on the graph $G$.  The details are as follows.
For any $v_{1},v_{2}\in VG$ and $t>0$, define
\begin{equation}
H_{0}(v_{1},v_{2};t)=\frac{1}{\sqrt{\mu_{v_{1}}\mu_{v_{2}}}}\int\limits _{N(v_{1})}\int\limits _{N(v_{2})}K(x,y;t)\eta_{v_{1}}(x)\eta_{v_{2}}(y)dydx.\label{eq:H0-def}
\end{equation}
From the known properties of $K$, and from the smoothness
of the family $\eta_{v}$ for $v\in VG$, the function (\ref{eq:H0-def}) is infinitely
differentiable in $t$.
Moreover, for each $v\in VG$ the function
$$
u_{v}(x;t):=\int\limits _{N(v)}K(x,y;t)\eta_{v}(y)dy=\int\limits _{\Omega}K(x,y;t)\eta_{v}(y)dy
$$
is a solution to the initial value problem
\begin{equation}\label{eq:initial value problem1}
\left(\Delta_{\Omega}+\frac{\partial}{\partial t}\right)u_{v}(x;t)=0
\,\,\,\,\,
\text{\rm for}
\,\,\,\,\,
(x,t) \in \Omega \times (0,\infty)
\end{equation}
with
\begin{equation}\label{eq:initial value problem2}
 u_{v}(x;0)=\eta_{v}(x)
 \,\,\,\,\,
\text{\rm for}
\,\,\,\,\,
x\in\Omega.
\end{equation}

\begin{prop}
\label{prop: heat bounds}
Assume the notation as above.
\begin{itemize}
\item[(i)] For all $v_{1},v_{2}\in VG$, we have that
\begin{equation}
\lim_{t\to 0}H_{0}(v_{1},v_{2};t)=\left\{ \begin{array}{c}
1\,\,\,\textrm{ if \ensuremath{v_{1}=v_{2}}}\\
0\,\,\,\textrm{ if \ensuremath{v_{1}\neq v_{2}}}
\end{array}\right.\label{eq:dirac property of heat kernel ex}
\end{equation}
\item[(ii)] For any $k\geq 0$ all $v_{1},v_{2}\in VG$, there is a constant
$C_{k}(v_{1},v_{2})$ such that for all $t>0$, we have that
\begin{equation}
\left|\frac{\partial^{k}}{\partial t^{k}}H_{0}(v_{1},v_{2};t)\right|\leq C_{k}(v_{1},v_{2})
\,\,\,\,\,
\text{\rm as $t \rightarrow 0$.}
\label{eq: Hk deriv in t bound ex}
\end{equation}
\end{itemize}
\noindent
\end{prop}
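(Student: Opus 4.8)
The plan is to reduce both claims to classical facts about the heat kernel $K$ on $\Omega$. First I would rewrite \eqref{eq:H0-def} in terms of the functions $u_{v}$ introduced above, namely
\[
H_{0}(v_{1},v_{2};t)=\frac{1}{\sqrt{\mu_{v_{1}}\mu_{v_{2}}}}\int\limits_{N(v_{1})}u_{v_{2}}(x;t)\,\eta_{v_{1}}(x)\,dx ,
\]
and record three standard properties of the Dirichlet heat kernel on a bounded $C^{\infty}$ domain: nonnegativity $K(x,y;t)\ge 0$; the sub-conservation estimate $\int_{\Omega}K(x,y;t)\,dy\le 1$ for every $x\in\Omega$ and $t>0$; and strong continuity of the associated semigroup, which is exactly \eqref{eq:initial value problem2} read in $L^{2}(\Omega)$, i.e. $u_{v}(\cdot;t)\to\eta_{v}$ in $L^{2}(\Omega)$ as $t\to 0$. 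I would also note that, by (b), $\eta_{v}$ vanishes on $N_{\delta/2}(v)$, so $\mathrm{supp}(\eta_{v})\subseteq\{x\in N(v):d(x,\partial N(v))\ge\delta/2\}$, whose closure is a compact subset of the open set $N(v)$; in particular $\mathrm{supp}(\eta_{v})$ is a compact subset of $\Omega$, and $\mu_{v}=m(N(v))>0$ since $v\in N(v)$ and $N(v)$ is open.

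For part (i), the $L^{2}$-convergence $u_{v_{2}}(\cdot;t)\to\eta_{v_{2}}$ together with $\eta_{v_{1}}\in L^{2}(\Omega)$ allows passing to the limit under the integral, giving $\lim_{t\to 0}H_{0}(v_{1},v_{2};t)=(\mu_{v_{1}}\mu_{v_{2}})^{-1/2}\int_{\Omega}\eta_{v_{1}}(x)\eta_{v_{2}}(x)\,dx$. When $v_{1}\ne v_{2}$ the sets $N(v_{1})$ and $N(v_{2})$ are disjoint by construction, hence so are the supports of $\eta_{v_{1}}$ and $\eta_{v_{2}}$, and the integral is $0$; when $v_{1}=v_{2}=v$ the integral equals $\mu_{v}$ by (c), and multiplication by $1/\mu_{v}$ yields $1$. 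This is \eqref{eq:dirac property of heat kernel ex}.

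For part (ii), fix $t>0$. For $t$ ranging over a compact subinterval of $(0,\infty)$ the integrand of \eqref{eq:H0-def} and its $t$-derivatives up to order $k$ are bounded uniformly on the compact set $\overline{N(v_{1})}\times\overline{N(v_{2})}$ — there is no diagonal singularity, since $t>0$, even when $v_{1}=v_{2}$ — so one may differentiate \eqref{eq:H0-def} under the integral sign $k$ times and use $\partial_{t}K=-\Delta_{\Omega,y}K$ to replace $\partial_{t}^{k}K$ by $(-1)^{k}\Delta_{\Omega,y}^{k}K$. Applying Green's identity $k$ times in the $y$-variable, with all boundary terms vanishing because $\eta_{v_{2}}$ and all its derivatives are supported compactly inside $N(v_{2})$, moves the operator onto $\eta_{v_{2}}$:
\[
\frac{\partial^{k}}{\partial t^{k}}H_{0}(v_{1},v_{2};t)=\frac{(-1)^{k}}{\sqrt{\mu_{v_{1}}\mu_{v_{2}}}}\int\limits_{N(v_{1})}\int\limits_{N(v_{2})}K(x,y;t)\,\eta_{v_{1}}(x)\,(\Delta_{\Omega,y}^{k}\eta_{v_{2}})(y)\,dy\,dx .
\]
Bounding $|\Delta_{\Omega,y}^{k}\eta_{v_{2}}|$ by $\|\Delta_{\Omega}^{k}\eta_{v_{2}}\|_{\infty}$, using $K\ge 0$ and $\int_{N(v_{2})}K(x,y;t)\,dy\le\int_{\Omega}K(x,y;t)\,dy\le 1$, and then $\int_{N(v_{1})}\eta_{v_{1}}(x)\,dx\le\mu_{v_{1}}$ from \eqref{eq:CS}, produces
\[
\left|\frac{\partial^{k}}{\partial t^{k}}H_{0}(v_{1},v_{2};t)\right|\le\frac{\mu_{v_{1}}}{\sqrt{\mu_{v_{1}}\mu_{v_{2}}}}\,\|\Delta_{\Omega}^{k}\eta_{v_{2}}\|_{\infty}=:C_{k}(v_{1},v_{2}),
\]
a bound valid for all $t>0$, which in particular gives \eqref{eq: Hk deriv in t bound ex}.

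The interchange of differentiation with integration and the vanishing of the Green's-identity boundary terms are routine; I expect the main point to get right is the bookkeeping needed to see that $\mathrm{supp}(\eta_{v})$ is genuinely a compact subset of the open domain $\Omega$ (so that $\partial\Omega$ contributes nothing), together with checking that the three heat-kernel facts quoted above hold for the chosen boundary condition — which they do for the Dirichlet heat kernel on a bounded $C^{\infty}$ domain, and, as noted in the text, equally for the other standard boundary conditions.
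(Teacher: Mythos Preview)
Your proof is correct. Part (i) is essentially the same as the paper's argument, with $L^{2}$-continuity of the heat semigroup replacing dominated convergence; both lead to the same limiting integral $\int_{\Omega}\eta_{v_{1}}\eta_{v_{2}}$ and the same conclusion via disjoint supports and property (c).

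Part (ii), however, is a genuinely different and somewhat slicker route. The paper argues in two stages: for small $t$ it computes $\lim_{t\to 0}\partial_{t}^{k}H_{0}$ by writing $\partial_{t}^{k}u_{v_{2}}=(-1)^{k}\Delta_{\Omega}^{k}u_{v_{2}}$, letting $t\to 0$ to get $(-1)^{k}\Delta_{\Omega}^{k}\eta_{v_{2}}$, and invoking right-continuity plus the mean value theorem for integrals; for $t\ge\tilde{\delta}$ it appeals separately to uniform bounds on $\partial_{t}^{k}K$ away from $t=0$. You instead integrate by parts in the $y$-variable (Green's identity), moving $\Delta_{\Omega,y}^{k}$ from $K$ onto $\eta_{v_{2}}$, with boundary terms killed by the compact support of $\eta_{v_{2}}$ inside $\Omega$; this leaves $K$ untouched, and then nonnegativity and sub-conservation $\int_{\Omega}K(x,y;t)\,dy\le 1$ give a single explicit bound valid for all $t>0$ at once. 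Your approach avoids the small/large-$t$ split entirely and yields an explicit constant $\sqrt{\mu_{v_{1}}/\mu_{v_{2}}}\,\|\Delta_{\Omega}^{k}\eta_{v_{2}}\|_{\infty}$. The trade-off is that it leans on the maximum principle and sub-Markov property, so it is tied to the Dirichlet (or Neumann) boundary condition, whereas the paper's argument uses only smoothness and decay of $K$ and is thus more agnostic about which boundary condition was chosen --- consistent with the paper's remark that the particular boundary condition plays no essential role.
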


\begin{rem}\rm
From the above proposition it is evident that  $H_{0}(v_{1},v_{2};t)$, defined by \eqref{eq:H0-def} is a parametrix of order zero for the heat kernel on $G$. Moreover, one can show that for any $k\geq 0$, there is a polynomial $P_{k}(t)$ in $t$ whose coefficients depends on $v_{1}$ and $v_{2}$, such that $H_{0}(v_{1},v_{2};t) - P_{k}(t)$ is a parametrix of order $k+1$.
\end{rem}

\begin{proof}
(i) From the definition \eqref{eq:H0-def} of $H_{0}$, we have that
$$
\lim_{t\to0}H_{0}(v_{1},v_{2};t)=\lim_{t\to0}\frac{1}{\sqrt{\mu_{v_{1}}\mu_{v_{2}}}}\int\limits _{N(v_{1})}\eta_{v_{1}}(x)u_{v_{2}}(x;t)dx.
$$
Since all sets are bounded and have finite volume, the dominated convergence theorem implies that
$$
\lim_{t\to0}\int\limits _{N(v_{1})}\eta_{v_{1}}(x)u_{v_{2}}(x;t)dx=\int\limits _{N(v_{1})}\left(\eta_{v_{1}}(x)\lim_{t\to0}u_{v_{2}}(x;t)\right)dx.
$$
Hence,
$$
\lim_{t\to0}H_{0}(v_{1},v_{2};t)=\frac{1}{\sqrt{\mu_{v_{1}}\mu_{v_{2}}}}\int\limits _{N(v_{1})}\eta_{v_{1}}(x)\eta_{v_{2}}(x)dx.
$$
If $v_{1}\neq v_{2}$, the functions $\eta_{v_1}$ and $\eta_{v_2}$ have disjoint support, implying that $\lim_{t\to 0}H_{0}(v_{1},v_{2};t)=0$.
If $v_{1}=v_{2}$, we get that
$$
\lim_{t\to0}H_{0}(v_{1},v_{1};t)=\frac{1}{\mu_{v_{1}}}\int\limits _{N(v_{1})}\eta_{v_{1}}^{2}(x)dx=1,
$$
by assumption (c) regarding the function $\eta_{v}$.  This proves assertion (i).

To prove (ii),
consider two arbitrary and fixed vertices $v_{1},v_{2}\in VG$.  Since the function $\displaystyle \frac{\partial^{k}}{\partial t^{k}}H_{0}(v_{1},v_{2};t)$
is right-continuous in $t\geq0$, there exists $\delta=\delta(v_{1},v_{2},k)$
such that
\[
\left|\frac{\partial^{k}}{\partial t^{k}}H_{0}(v_{1},v_{2};t)\right|\leq1+\left|\lim_{t\to0}\left(\frac{\partial^{k}}{\partial t^{k}}H_{0}(v_{1},v_{2};t)\right)\right|,
\]
for all $t\in(0,\delta)$.
Let us fix such $\tilde{\delta}=\tilde{\delta}(v_{1},v_{2},k)$. To
prove that $\displaystyle  \left|\frac{\partial^{k}}{\partial t^{k}}H_{0}(v_{1},v_{2};t)\right|$
is bounded on $(0,\tilde{\delta})$ it suffices to show that
$\displaystyle \left|\lim_{t\to0}(\frac{\partial^{k}}{\partial t^{k}}H_{0}(v_{1},v_{2};t))\right|$
is bounded. Let us take $k=1$ first, to illustrate our approach.

From the definition \eqref{eq:H0-def} of $H_{0}$, the dominated
convergence theorem implies that
\[
\lim_{t\to0}\frac{\partial}{\partial t}H_{0}(v_{1},v_{2};t)=\sqrt{\frac{\mu_{v_{1}}}{\mu_{v_{2}}}}\frac{1}{\mu_{v_{1}}}\int\limits _{N(v_{1})}\lim_{t\to0}\left(\frac{\partial}{\partial t}u_{v_{2}}(x;t)\right)\eta_{v_{1}}(x)dx.
\]
Recall that $u_{v_{2}}(x;t)$ is a  solution to the
initial-value problem \eqref{eq:initial value problem1} and \eqref{eq:initial value problem2} with $v=v_{2}$.
As such, the continuity of $u_{v_{2}}(x;t)$ and $\Delta_{\Omega}u_{v_{2}}(x;t)$
in the $t$-variable implies that
\begin{align*}
\lim_{t\to0}\left(\frac{\partial}{\partial t}u_{v_{2}}(x;t)\right)&=\lim_{t\to0}\left(-\Delta_{\Omega}u_{v_{2}}(x;t)\right)
\\&=-\Delta_{\Omega}u_{v_{2}}(x;0)=-\Delta_{\Omega}\eta_{v_{2}}(x).
\end{align*}
Therefore,
$$
\lim_{t\to0}\frac{\partial}{\partial t}H_{0}(v_{1},v_{2};t)=\sqrt{\frac{\mu_{v_{1}}}{\mu_{v_{2}}}}\cdot\frac{1}{\mu_{v_{1}}}\int\limits _{N(v_{1})}(-\Delta_{\Omega}\eta_{v_{2}}(x))\eta_{v_{1}}(x)dx.
$$
As a result, we have that
$$
\lim_{t\to0}\frac{\partial}{\partial t}H_{0}(v_{1},v_{2};t)=0
\,\,\,\,\,
\text{if  } v_{1}\neq v_{2}
$$
and
\[
\lim_{t\to0}\frac{\partial}{\partial t}H_{0}(v_{1},v_{1};t)=\frac{1}{\mu_{v_{1}}}\int\limits _{N(v_{1})}(-\Delta_{\Omega}\eta_{v_{1}}(x))\eta_{v_{1}}(x)dx.
\]
We now can apply the mean value theorem for integrals, which is justified because
$\eta_{v}$ is non-negative and $\Delta_{\Omega}\eta_{v_{1}}(x)$
is continuous.  In doing so, we conclude that for some $x_{1}\in N(v_{1})$ we have the bounds
\[
\lim_{t\to0}\frac{\partial}{\partial t}H_{0}(v_{1},v_{1};t)\leq|\Delta_{\Omega}\eta_{v_{1}}(x_{1})|\cdot\frac{1}{\mu_{v_{1}}}\int\limits _{N(v_{1})}\eta_{v_{1}}(x)dx\leq|\Delta_{\Omega}\eta_{v_{1}}(x_{1})|,
\]
where the last inequality follows from \eqref{eq:CS}.  Therefore,
\[
\left|\lim_{t\to0}(\frac{\partial}{\partial t}H_{0}(v_{1},v_{2};t))\right|\leq\left|\Delta_{\Omega}\eta_{v_{1}}(x_{1})\right|
\cdot \delta_{v_{1}=v_{2}},
\]
for some $x_{1}\in N(v_{1})$.  For all $v \in VG$, the functions $\eta_{v}$
are smooth.  Let us define
\[
C_{k}(v):=\sup_{x\in N(v)}\left|\Delta_{\Omega}^{k}\eta_{v}(x)\right|,
\]
where $\Delta_{\Omega}^{k}$ denotes the operator $\Delta_{\Omega}$ applied
$k$ times.  Then we have that
\[
\left|\lim_{t\to0}(\frac{\partial}{\partial t}H_{0}(v_{1},v_{2};t))\right|\leq C_{1}(v_{1})\cdot \delta_{v_{1}=v_{2}}.
\]
Consequently, we have that
$\displaystyle \frac{\partial}{\partial t}H_{0}(v_{1},v_{2};t)$ is bounded by
a constant which is independent of $t$ for all $t\in(0,\tilde{\delta})$.
The proof of (ii) in the case $k=1$ is now complete.

When $k>1$, we proceed in a similar manner. Namely,
we write that
\[
\lim_{t\to0}\frac{\partial^{k}}{\partial t^{k}}H_{0}(v_{1},v_{2};t)=\sqrt{\frac{\mu_{v_{1}}}{\mu_{v_{2}}}}\frac{1}{\mu_{v_{1}}}\int\limits _{N(v_{1})}\lim_{t\to0}\left(\frac{\partial^{k}}{\partial t^{k}}u_{v_{2}}(x;t)\right)\eta_{v_{1}}(x)dx.
\]
Since the functions $\eta_{v}$ are $C^{\infty}$,
\begin{align*}
\lim_{t\to0}\left(\frac{\partial^{k}}{\partial t^{k}}u_{v_{2}}(x;t)\right)&=\lim_{t\to0}\left((-1)^{k}\Delta_{\Omega}^{k}u_{v_{2}}(x;t)\right)
\\&=(-1)^{k}\Delta_{\Omega}^{k}u_{v_{2}}(x;0)=(-1)^{k}\Delta_{\Omega}^{k}\eta_{v_{2}}(x),
\end{align*}
and arguing as above, we get
\[
\left|\lim_{t\to0}(\frac{\partial^{k}}{\partial t^{k}}H_{0}(v_{1},v_{2};t))\right|\leq C_{k}(v_{1})\cdot \delta_{v_{1}=v_{2}}.
\]
Therefore  $\displaystyle \frac{\partial^{k}}{\partial t^{k}}H_{0}(v_{1},v_{2};t)$
is bounded
by a constant which is independent of $t$ for all $t\in(0,\tilde{\delta})$.
For any fixed $\tilde{\delta}>0$, the function $\displaystyle \frac{\partial^{k}}{\partial t^{k}}K(x,y;t)$
is uniformly bounded on the set $\Omega\times\Omega\times[\tilde{\delta},\infty]$,
by a constant $C_{\tilde{\delta},k}$, see \cite{Gr09}. Therefore, for all $v_{1},v_{2}\in VG$ and $t\geq\tilde{\delta}$,
\begin{equation}
  \left|\frac{\partial^{k}}{\partial t^{k}}H_{0}(v_{1},v_{2};t)\right|\leq C_{\tilde{\delta},k}\frac{1}{\sqrt{\mu_{v_{1}}\mu_{v_{2}}}}\int\limits _{N(v_{1})}\int\limits _{N(v_{2})}\eta_{v_{1}}(x)\eta_{v_{2}}(y)\,dy\,dx
  \leq C_{\tilde{\delta},k}m(\Omega)\label{eq: bound for deriv away from zero}
\end{equation}
by \eqref{eq:CS}.
We have shown that $\displaystyle \frac{\partial^{k}}{\partial t^{k}}H_{0}(v_{1},v_{2};t)$ is bounded
by a constant independent of $t$ for all $t\in[\tilde{\delta},\infty)$,
and the proof of the proposition is complete.
\end{proof}

\begin{rem}
The normalization factor $\frac{1}{\sqrt{\mu_{v_{1}}\mu_{v_{2}}}}$
in the definition \eqref{eq:H0-def} was chosen so that
\[
H_{0}(v_{1},v_{2};t)=H_{0}(v_{2},v_{1};t).
\]
If one defines
\begin{equation}
\mathcal{H}_{0}(v_{1},v_{2};t):=\frac{1}{\mu_{v_{1}}}\int\limits _{N(v_{1})}\int\limits _{N(v_{2})}K(x,y;t)\eta_{v_{1}}(x)\eta_{v_{2}}(y)dydx.,\label{eq:H0-def-updated}
\end{equation}
it is easy to see that $\mathcal{H}_{0}(v_{1},v_{2};t)$ is also a
parametrix of order zero for the heat kernel on $G$.
The  different normalization in \eqref{eq:H0-def-updated} can be justified
by the fact that in the construction of the heat kernel on $G$ the
graph Laplacian acts only to the first variable and the formula \eqref{eq:H0-def-updated}
can be written as
\[
\mathcal{H}_{0}(v_{1},v_{2};t)=\frac{1}{\mu_{v_{1}}}\int\limits _{N(v_{1})}\eta_{v_{1}}(x)u_{v_{2}}(x;t)dx,
\]
which is exactly the ``averaged by $\eta_{v_{1}}$''
discretization (in the sense of Definition 4.1. on p. 689 of \cite{BIK14})
of the solution $u_{v_{2}}(x;t)$ to the initial value problem \eqref{eq:initial value problem1}
and \eqref{eq:initial value problem2}.
\end{rem}

\begin{rem}\label{rem:Domain_to_graph}
A particularly interesting special case of Theorem \ref{thm: formula for heat kernel}
is when $k=0$.  Specifically, we have that the heat kernel
on the graph can be written as
\begin{equation}
H_{G}(v_{1},v_{2};t)=H_{0}(v_{1},v_{2};t)+(H_{0}\ast F)(v_{1},v_{2};t),\label{eq:heat kernel with k=00003D0}
\end{equation}
where $H_{0}(v_{1},v_{2};t)$ is defined by \eqref{eq:H0-def}.
This formula is particularly nice because $\lim_{t\to0}(H_{0}\ast F)(v_{1},v_{2};t)=0$
for all $v_{1},v_{2}\in VG$.
\end{rem}


\section{Heat kernels for subgraphs\label{sec:Heat-kernels-subgraphs}}

Let $G$ be a finite or infinite subgraph of a finite or infinite
weighted graph $\tilde{G}$ with the weight function $\tilde{w}_{xy}$
for vertices $x,y\in V\tilde{G}$.  For us, a subgraph $G$ of $\tilde{G}$ is
obtained by using  a subset $VG$ of the vertices $V\tilde{G}$
defining the weight function $w_{xy}$ for $x,y \in VG$ to be equal either to the same weight $\tilde{w}_{xy}$ on $\tilde{G}$ or set to zero.
In the case when $\tilde{w}_{xy}\neq 0$ but $w_{xy}=0$, we say that we have removed the edge connecting $x$ and $y$ in $\tilde{G}$.  We define
\begin{equation}
  \label{defn:valence}
\tilde\mu(x)=\sum_{y\in V\tilde{G}}\tilde{w}_{xy}\mbox{\ and\ } \mu(x)=\sum_{y\in VG}w_{xy}.
\end{equation}
When the graph $\tilde{G}$ is infinite, we assume that
for all $x\in V\tilde{G}$, the function $\tilde{w}(x,\cdot)$ has finite support
and that the quantity $\tilde\mu(x)$ is uniformly
bounded for $x \in V\tilde{G}$.
In this case, we say that $\tilde{G}$ has \emph{bounded valence}.  We further assume that $\tilde G$ is of bounded degree.
Let $\partial G$ denote the set of \emph{boundary vertices} of the subgraph
$G$ in $\tilde{G}$.  That is, $\partial G$ consists of vertices $x\in VG$ such that
$\mu(x)<\tilde\mu(x).$
We call $\mathrm{Int}(G)=VG\setminus\partial G$ the
 set of interior points of $G$.

Let $H_{\tilde{G}}$ denote the heat kernel for the graph $\tilde{G}$
with respect to the weighted graph Laplacian $\Delta_{\tilde{G}}$.  Recall that
for any function $f:V\tilde{G}\to\mathbb{C}$
\[
\Delta_{\tilde{G}}f(x)=\sum_{y\in\tilde{G}}(f(x)-f(y))\tilde{w}_{xy}.
\]
The Laplacian $\Delta_{\tilde{G}}$
is a non-negative and self-adjoint operator on the Hilbert space $L^{2}(V\tilde{G})$.
Under our assumptios of finite valence and finite degree, the existence and uniqueness
of the heat kernel follows from \cite{DM06}.

Let $L_{\tilde{G},v_{1}}=\Delta_{\tilde{G},v_{1}}+\partial_{t}$ be
the heat operator on $\tilde{G}$ acting in the first variable, and
let $L_{G,v_{1}}=\Delta_{G,v_{1}}+\partial_{t}$ be the heat operator
on $G$.
Define
$$
H(v_{1},v_{2};t):=H_{\tilde{G}}(v_{1},v_{2};t)
\,\,\,\,\,
\text{\rm for $v_{1},v_{2}\in VG$ and $t\geq0$.}
$$
In other words, $H$ is the restriction
of the heat kernel from $\tilde{G}$ to $G$.  Clearly, if  $v_1\in \mathrm{Int}(G)$ we have
$L_{G,v_{1}}H(v_{1},v_{2};t)=0$, but if $v_1\in\partial G$ then
\begin{equation} \label{eq: L on boundary points}
L_{G,v_{1}}H(v_{1},v_{2};t)=
-\sum_{v\in A_{\tilde{G},G}(v_{1})}\left(H(v_{1},v_{2};t)-H(v,v_{2};t)\right)w_{v_{1}v},
\end{equation}
where
$$
A_{\tilde{G},G}(v_{1})=\{v\in V\tilde{G}\setminus VG: \, v\, \text{is adjacent to }v_1 \text{ in } \tilde{G}\} \cup \{v\in VG :\, \tilde{w}_{vv_1}\neq 0\, \text{ but }\, w_{vv_1}=0\}.
$$
The set $A_{\tilde{G},G}(v_{1})$
is not empty for $v_{1}\in\partial G$, so one cannot expect that the difference
\eqref{eq: L on boundary points}
to vanish so $H$ is certainly not the heat kernel on $G$.
However, we show in the following proposition that $H(v_{1},v_{2};t)$ is a
parametrix of order zero for the heat kernel on $G$.
\begin{prop}
\label{prop: heat kernel on the cover parametrix}
Assume the notation as above.
\begin{itemize}
\item[(i)] For all $v_{1},v_{2}\in VG$, we have that
\begin{equation}
\lim_{t\to 0}H(v_{1},v_{2};t)=\left\{ \begin{array}{c}
1\,\,\, \textrm{ if \ensuremath{v_{1}=v_{2}}}\\
0\,\,\, \textrm{ if \ensuremath{v_{1}\neq v_{2}}}.
\end{array}\right.
\label{eq:dirac property of heat kernel ex2}
\end{equation}
\item[(ii)] For $v_{1},v_{2}\in VG$, there is a constant
$C(v_{1},v_{2})$ such that for all $t>0$, we have that
\begin{equation}
\left|\frac{\partial}{\partial t}H(v_{1},v_{2};t)\right|\leq C(v_{1},v_{2}).\label{eq: Hk deriv in t bound ex2}
\end{equation}
\end{itemize}
\end{prop}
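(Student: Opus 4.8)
The plan is to handle the two assertions separately; part (i) is essentially a tautology, and part (ii) reduces to the classical pointwise bound $0\le H_{\tilde G}\le 1$ for the heat kernel on the ambient graph.

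For part (i), I would simply observe that $H(v_1,v_2;t)$ is by construction the restriction to $VG\times VG$ of the heat kernel $H_{\tilde G}$ on $\tilde G$, and that the existence and uniqueness results cited above (\cite{DM06}) provide $\lim_{t\to 0}H_{\tilde G}(x,y;t)=\delta_{x=y}$ for \emph{all} $x,y\in V\tilde G$. Restricting the pair to $VG\times VG$ yields \eqref{eq:dirac property of heat kernel ex2} with nothing further to check.

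For part (ii), I would begin from the heat equation on $\tilde G$ satisfied by $H_{\tilde G}$, which, since $VG\subseteq V\tilde G$, gives for $v_1,v_2\in VG$ that
\[
\frac{\partial}{\partial t}H(v_1,v_2;t)=-\Delta_{\tilde G,v_1}H_{\tilde G}(v_1,v_2;t)=-\sum_{y\in V\tilde G}\bigl(H_{\tilde G}(v_1,v_2;t)-H_{\tilde G}(y,v_2;t)\bigr)\tilde w_{v_1y},
\]
the sum being finite because $\tilde w(v_1,\cdot)$ has finite support. Next I would invoke the standard bound $0\le H_{\tilde G}(x,y;t)\le 1$, valid for every $x,y\in V\tilde G$ and $t>0$: positivity holds because $e^{-t\Delta_{\tilde G}}=e^{-tm}e^{t(mI-\Delta_{\tilde G})}$ with $m=\sup_{x}\tilde\mu(x)<\infty$ and $mI-\Delta_{\tilde G}$ a matrix with nonnegative entries, while the upper bound follows from sub-Markovianity, i.e.\ $\Delta_{\tilde G}\mathbf 1=0$ gives $\sum_{y}H_{\tilde G}(x,y;t)\le 1$; see \cite{DM06} and \cite{Wo21}. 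Hence each difference $H_{\tilde G}(v_1,v_2;t)-H_{\tilde G}(y,v_2;t)$ has modulus at most $1$, and therefore
\[
\left|\frac{\partial}{\partial t}H(v_1,v_2;t)\right|\le\sum_{y\in V\tilde G}\tilde w_{v_1y}=\tilde\mu(v_1),
\]
so that \eqref{eq: Hk deriv in t bound ex2} holds with the explicit choice $C(v_1,v_2):=\tilde\mu(v_1)$, which is finite by the bounded-valence hypothesis and in fact independent of $v_2$ and $t$.

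The only step that is not a routine manipulation is the uniform estimate $0\le H_{\tilde G}\le 1$; for finite $\tilde G$ it is elementary (a positive semigroup fixing $\mathbf 1$), and for infinite $\tilde G$ it is precisely where the bounded-valence and bounded-degree assumptions on $\tilde G$ are used, ensuring that $\Delta_{\tilde G}$ is a bounded self-adjoint operator on $L^2(V\tilde G)$ generating a sub-Markovian semigroup. I would close by noting that, combined with \eqref{eq: L on boundary points} — which displays $L_{G,v_1}H$ as a finite sum of differences of heat-kernel values, hence a continuous and bounded function of $t$ on $[0,\infty)$ — parts (i) and (ii) establish that $H$ is a parametrix of order zero for the heat kernel on $G$, which is the role this proposition plays in the sequel.
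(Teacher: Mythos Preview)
Your proof is correct, and part (i) is handled exactly as in the paper. For part (ii) both you and the paper begin by writing $\partial_t H=-\Delta_{\tilde G,v_1}H_{\tilde G}$ and reducing to a pointwise bound on heat-kernel values, but the arguments then diverge. The paper bounds $H_{\tilde G}(v_1,v_2;t)$ in two regimes: near $t=0$ by continuity and the Dirac property, and for $t\ge\eta$ by the spectral expansion together with Cauchy--Schwarz, obtaining $|H_{\tilde G}(v_1,v_2;t)|\le\bigl(H_{\tilde G}(v_1,v_1;\eta)H_{\tilde G}(v_2,v_2;\eta)\bigr)^{1/2}$. You instead appeal to positivity and sub-Markovianity of the semigroup $e^{-t\Delta_{\tilde G}}$ to get $0\le H_{\tilde G}\le 1$ globally. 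Your route is shorter, avoids any spectral decomposition, and delivers the explicit constant $C(v_1,v_2)=\tilde\mu(v_1)$, which is independent of $v_2$; the paper's spectral argument, by contrast, does not rely on positivity preservation and so would carry over to operators whose semigroups are not sub-Markovian.
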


\begin{proof}
Part (i) follows trivially from the fact that $H(v_{1},v_{2};t)$
is the restriction of the heat kernel on $\tilde{G}$, so it remains to prove part (ii).

Since $H(v_{1},v_{2};t)=H_{\tilde G}(v_{1},v_{2};t) $ for $v_1,v_2\in VG,$
\[
  \frac{\partial}{\partial t}H(v_{1},v_{2};t)=-\Delta_{\tilde{G},v_{1}}H(v_{1},v_{2};t)
  =\sum_{y\in VG}H(y,v_2;t)\tilde w_{v_1y}-H(v_1, v_2;t)\mu(v_1).
\]
Both $\tilde w$ and $\mu$ are bounded, so
in order to prove \eqref{eq: Hk deriv in t bound ex2}, it suffices to show that $H(v_{1},v_{2};t)$
is bounded as a function of $t$ for fixed $v_{1},v_{2}$ by some
constant, which may depend upon $v_{1}$ and $v_{2}$.  We could not find a convenient reference for this fact so we provide a short
proof for the sake of completeness.

The Dirac property \eqref{eq:dirac property of heat kernel ex2} implies
that $H(v_{1},v_{2};t)$ is bounded by an explicit constant, such as $2$,
when viewed as a function of
$t$ for $t\in[0,\eta)$ for some $\eta>0$.  It remains to prove boundedness
for $t\in[\eta,\infty)$. In the case when $V\tilde{G}$ is finite, we can use
the spectral expansion of the heat
kernel as a sum over eigenfunctions $\psi_{j}$ of the Laplacian $\Delta_{\tilde{G}}$
and the Cauchy-Schwartz inequality
to deduce that for $t\geq\eta$ one has
\begin{align}\label{eq:offdiagonal_bound}\nonumber
|H(v_{1},v_{2};t)|&\leq\left(\sum_{j}e^{-\lambda_{j}t}|\psi_{j}(v_{1})|^{2}\right)^{1/2}\left(\sum_{j}e^{-\lambda_{j}t}|\psi_{j}(v_{2})|^{2}\right)^{1/2}
\\& \leq\left(H(v_{1},v_{1};\eta)H(v_{2},v_{2};\eta)\right)^{1/2}.
\end{align}
Note that the fact that the eigenvalues $\lambda_j$ are nonnegative implies that each $e^{-\lambda_j t}$ above is nonincreasing on $t>0$.  If $V\tilde{G}$ is infinite, one re-writes the sums in \eqref{eq:offdiagonal_bound}
as integrals over the appropriate spectral measures, and one gets that
$H(v_{1},v_{2};t)$ is bounded for $t\geq\eta$
by $\left(H(v_{1},v_{1};\eta)H(v_{2},v_{2};\eta)\right)^{1/2}$.  This completes
the proof of the claim.
\end{proof}

If the graph $G$ is a \emph{finite} subgraph of $\tilde{G}$, Theorem
\ref{thm: formula for heat kernel} applies to deduce the heat kernel
on $G$ from the parametrix $H(v_{1},v_{2};t)$. However, the parametrix construction of
the heat kernel on infinite subgraph of a finite valence infinite
graph $\tilde{G}$ can also be carried out, under the assumption that
the boundary $\partial G$ of $G$ is finite.  The precise
result is the following proposition.

\begin{prop} \label{prop: heat kernel subgraph of inf graph}
Let $G$ be an infinite subgraph of a finite valence, bounded degree graph $\tilde{G}$
such that boundary $\partial G$ is finite. Let $\tilde{H}(v_{1},v_{2};t)$
denote the heat kernel on $\tilde{G}$. Then the series
\begin{equation}
\sum_{\ell=1}^{\infty}(-1)^{\ell}(L_{G,v_{1}}\tilde{H})^{\ast\ell}(v_{1},v_{2};t)\label{eq:defining series for Ftilde}
\end{equation}
converges absolutely and uniformly on any compact subset of $VG\times VG\times[0,\infty)$
and defines a function $\tilde{F}(v_{1},v_{2};t)$ on $VG\times VG\times[0,\infty)$.
Furthermore, the heat kernel $H_{G}(v_{1},v_{2};t)$ on $G$ is given by
$$
H_{G}(v_{1},v_{2};t):=\tilde{H}(v_{1},v_{2};t)+(\tilde{H}\ast\tilde{F})(v_{1},v_{2};t).
$$
\end{prop}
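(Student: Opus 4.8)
The plan is to realize $\tilde F$ as a Neumann series built from $K(v_1,v_2;t):=L_{G,v_1}\tilde H(v_1,v_2;t)$ (with $\tilde H$ restricted to $G$), and to copy the argument of Theorem~\ref{thm: formula for heat kernel}, the only new issue being that the vertex sums in the convolutions now range over the infinite set $VG$. Everything hinges on one structural observation already visible in \eqref{eq: L on boundary points}: $K(v_1,v_2;t)=0$ whenever $v_1\in\mathrm{Int}(G)$, so $K$ is supported on the \emph{finite} set $\partial G$ in its first variable, and for $v_1\in\partial G$ formula \eqref{eq: L on boundary points} writes $K(v_1,v_2;t)$ as a finite combination (bounded valence) of the bounded functions $\tilde H(\cdot,v_2;t)$ with bounded coefficients. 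Thus I would first fix an arbitrary compact $S_1\times S_2\times[0,T]$ and a constant $C_0$ with $|K(v_1,v_2;t)|\le C_0$ for $t\in[0,T]$, $v_1\in\partial G$, and $v_2\in S_2\cup\partial G$. (That $\tilde H$ is globally bounded in $t$, hence bounded on $[0,T]$, is the content of the spectral estimate in the proof of Proposition~\ref{prop: heat kernel on the cover parametrix}.)

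Next I would feed the support property into the convolution estimates. Expanding $(K)^{\ast\ell}$ as an $(\ell-1)$-fold vertex sum against $\ell$ copies of $K$, each factor forces its first argument to lie in $\partial G$; hence every intermediate summation variable runs over the finite set $\partial G$, and $(K)^{\ast\ell}(v_1,\cdot\,;\cdot)\equiv0$ for $v_1\notin\partial G$. So the proof of Lemma~\ref{lem:convolution bounds} and the induction behind \eqref{eq:mult_convolution_bound} go through verbatim with $|\partial G|$ replacing $|VG|$ and $k=0$, yielding $|(K)^{\ast\ell}(v_1,v_2;t)|\le C_0^{\ell}|\partial G|^{\ell-1}t^{\ell-1}/(\ell-1)!$ on $[0,T]$, and therefore $\sum_{\ell\ge1}|(K)^{\ast\ell}(v_1,v_2;t)|\le C_0 e^{C_0|\partial G|t}$. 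This proves that \eqref{eq:defining series for Ftilde} converges absolutely and uniformly on $S_1\times S_2\times[0,T]$, hence on every compact subset of $VG\times VG\times[0,\infty)$, and defines a continuous $\tilde F$ that is again supported in $\partial G$ in its first variable; the same absolute convergence plus termwise integration (the vertex sum being finite because one factor is supported on $\partial G$) gives $K\ast\tilde F=\sum_{\ell\ge1}(-1)^{\ell}(K)^{\ast(\ell+1)}$, the analogue of \eqref{eq: convolution with inf sum}.

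Then I would verify that $H_G:=\tilde H+\tilde H\ast\tilde F$ is the heat kernel on $G$. The convolution $\tilde H\ast\tilde F$ is well defined (finite vertex sum, bounded integrand on $[0,t]$), and since that sum collapses to $\partial G$ one gets $|(\tilde H\ast\tilde F)(v_1,v_2;t)|\le |\partial G|\,t\cdot\sup_{[0,t]}|\tilde H|\cdot\sup_{[0,t]}|\tilde F|\to0$ as $t\to0$, so with the Dirac property of $\tilde H$ we obtain $\lim_{t\to0}H_G(v_1,v_2;t)=\delta_{v_1=v_2}$. For the heat equation I would repeat the computation in the proof of Theorem~\ref{thm: formula for heat kernel}: Lemma~\ref{lem: L of convol} applies here because $\tilde H|_G$ is a parametrix, because $\tilde F$ is continuous and bounded on $(0,t]$ for each fixed $t$, and because every vertex sum that arises is finite (by the bounded valence of $\tilde G$, or by the support of $K$ and $\tilde F$ on $\partial G$); it yields $L_{G,v_1}(\tilde H\ast\tilde F)=\tilde F+K\ast\tilde F$, so
\[
L_{G,v_1}H_G=K+\tilde F+K\ast\tilde F=K+\sum_{\ell\ge1}(-1)^{\ell}(K)^{\ast\ell}+\sum_{\ell\ge1}(-1)^{\ell}(K)^{\ast(\ell+1)}=0
\]
after re-indexing, rearrangement being justified by absolute convergence. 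Finally $H_G$ is continuous on $VG\times VG\times[0,\infty)$, smooth in $t$ on $(0,\infty)$, and of at most exponential growth in $t$ by the bound above; since $G$, as a subgraph of $\tilde G$, has bounded valence, the uniqueness statement of \cite{DM06} then identifies $H_G$ with the heat kernel on $G$.

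I expect the main obstacle to be nothing more than the bookkeeping around these infinite vertex sums, and the decisive point is the localization of $L_{G,v_1}\tilde H$ to the finite boundary $\partial G$: this collapses every iterated convolution to a sum over $\partial G$ and lets all of Section~\ref{sec: conv on a graph} be reused with $|\partial G|$ in place of $|VG|$. The one subtlety to flag is that $\tilde F$ is only \emph{locally} bounded in $t$ (it may grow like $e^{C_0|\partial G|t}$), which is nonetheless enough both for the Leibniz-rule step inside Lemma~\ref{lem: L of convol} and, via the matching growth bound on $H_G$, for applying uniqueness of the heat kernel on the infinite graph $G$.
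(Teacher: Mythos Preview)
Your proposal is correct and follows essentially the same route as the paper: the key observation that $L_{G,v_1}\tilde H$ is supported on the finite set $\partial G$ in its first variable collapses all iterated convolution sums to sums over $\partial G$, after which the estimates of Lemma~\ref{lem:convolution bounds}/Lemma~\ref{lem:convergence of the series} (with $|\partial G|$ in place of $|VG|$) and the computation of Theorem~\ref{thm: formula for heat kernel} go through verbatim. If anything you are a bit more careful than the paper about the endgame---explicitly noting the local-in-$t$ exponential bound on $\tilde F$ and $H_G$ and invoking uniqueness on the infinite graph $G$---whereas the paper simply says the method of Theorem~\ref{thm: formula for heat kernel} applies.
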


\begin{proof}
We start by observing that the function $L_{G,v_{1}}\tilde{H}(v_{1},v_{2};t)$
when viewed as a function of the first variable is supported on the
finite set $\partial G$. Moreover, the heat kernel $\tilde{H}(v_{1},v_{2};t)$ is bounded. Hence,
there exists a constant $C$ depending on the finite subset $\partial G$
of $G$ and the valence of $G$ so that
\begin{align*}
(L_{G,v_{1}}\tilde{H}\ast L_{G,v_{1}}\tilde{H})(v_{1},v_{2};t) & =\sum_{v\in\partial G}\int\limits _{0}^{t}L_{G,v_{1}}\tilde{H}(v_{1},v;t-\tau)L_{G,v_{1}}\tilde{H}(v,v_{2};\tau)d\tau\\
 & \leq |\partial G|\cdot C t,
\end{align*}
for all $(v_{1},v_{2})\in\partial G\times VG$ and all $t\geq0$, where
$|\partial G|$ denotes the number of elements in $\partial G$.
By arguing analogously as in the proof of Lemma \ref{lem:convergence of the series},
we conclude that the series \eqref{eq:defining series for Ftilde} converges
absolutely and uniformly on any compact subset of $VG\times VG\times[0,\infty)$.
The Dirac property $\lim_{t\to0}\tilde{H}(v_{1},v_{2};t)=\delta_{v_{1}=v_{2}}$
when combined with the finiteness of support of $L_{G,v_{1}}\tilde{H}(v_{1},v_{2};t)$
is sufficient to conclude that Lemma \ref{lem: L of convol} also holds
 with $H=\tilde{H}$.  The method of the proof of
Theorem \ref{thm: formula for heat kernel} then applies to show
that $H_{G}(v_{1},v_{2};t)$ is the heat kernel on $G$.
\end{proof}

\section{A subgraph of the complete graph on $N$ vertices}\label{sec: heat kernels on subgraphs}

In this section we will develop explicitly our results in the setting of a subgraph of the
complete graph on $N$ vertices. Throughout this section we will employ some of the same
considerations as in \cite{LNY21}, namely in the explicit computation of the graph convolution.
In doing so, we will show the relationship between our Theorem \ref{thm: formula for heat kernel} and
Theorem 3.3 of \cite{LNY21}.

Let $K_{N}$ denote the complete graph $K_{N}$ on $N$ vertices. Let us label the vertices
with the integers $1, 2, \cdots, N$ and define the weights to be
$\tilde{w}_{xy}=1$ for all $x\neq y$.  In the notation of the previous section, $K_N$ plays the role of
$\tilde{G}$.
The heat kernel on $K_{N}$ is well known.
Indeed, if we set $H_{\tilde G}:=H_{K_{N}}$, then we have that
\begin{equation}\label{eq: heat K on compl graph}
H_{\tilde G}(x,y;t):=\left\{ \begin{array}{ll}
\frac{1}{N}+\left(1-\frac{1}{N}\right)e^{-Nt}, & \text{ for } x=y\\
\frac{1}{N}-\frac{1}{N}e^{-Nt}, & \text{ otherwise.}
\end{array}\right.
\end{equation}
Let $G$ be any subgraph of $K_N$.  Without loss of generality, we assume that $G$ has $N$ vertices, and take $H_{\tilde G}$ as a parametrix for the heat kernel $H_G$ on $G$.
In order to compare our results with those from \cite{LNY21}, we will borrow and adapt some notation from \cite{LNY21}.

For any vertex $x\in VG$, let $d_x^G$ be the degree of $x$ in $G$ and set $d_x^{c}=(N-1)- d_v^G$.
For two vertices $x\in VG$ and $y \in VK_N$, let us write $x \sim_c y$ if $x\neq y$ and $x$ and $y$ are not connected in $G$.  Define the combinatorial Laplacian on the complement of $G$ in $K_N$ by
\begin{equation}
  \label{def:Laplacian on complement}
\Delta^cf(x)=\sum_{y\sim_c x} (f(x)-f(y))
\end{equation}
From \eqref{eq: L on boundary points} we have that
$$
L_1 H_{\tilde G}(x,y;t)=- \sum_{z\sim_c x} \left(H_{\tilde G}(x,y;t) -H_{\tilde G}(z,y;t)\right),
$$
for all $x,y\in VG$ and any $t>0$.  Note that we have shortened the notation by lettng $L_1$ denote the heat operator of $G$ acting on the first vertex variable $x$.
From the expression  \eqref{eq: heat K on compl graph} for the heat kernel $H_{\tilde G}$, we immediately have
for $x,y \in VG$ that
\begin{equation}
L_1 H_{\tilde G}(x,y;t)=\left\{
                 \begin{array}{ll}
                   -e^{-Nt} d_{x}^{c}, & \text{  if  } x=y: \\
                   e^{-Nt}, & \text{  if  } x\sim_c y; \\
                   0, & \text{  otherwise.}
                 \end{array}
               \right.\label{eq:L1_computation}
\end{equation}
In the notation of formula (3.4) from \cite{LNY21},
\begin{equation}
  \label{eq:u}
 L_1H_{\tilde G}(x,y;t) = -e^{-Nt}u_y^{c}(x).
\end{equation}
(We write $u_y^{c}$ for $u_y^{G^c}$ in \cite{LNY21}.)

In Section 3 of \cite{LNY21} the authors define an operator $T$ which acts on functions $F:VG\times VG\times\R_{>0}$ by
\begin{align}
  \label{def:T}
  TF(x,y;t)&=\int_0^t e^{-N(t-s)}
\Delta_x^c F (x,y;s)\, ds\\ \nonumber
  &=\int_0^t e^{-N(t-s)}(d_x^cF(x,y;s)-\sum_{\substack{v\in VG\\v\sim_c x}} F(v,y;s)
 ) \,ds.
\end{align}
In other words
$$TF(x,y;t)=-(L_1H_{\tilde G}\ast F)(x,y;t).
$$
This explains the connection between our Theorem \ref{thm: formula for heat kernel}:
$$H_G(x,y;t) = H_{\tilde G}(x,y;t)+\sum_{l=1}^\infty (-1)^l ((L_1H_{\tilde G})^{\ast l}\ast H_{\tilde G})(x,y;t)
$$
and equation (3.3) of \cite{LNY21}:
$$H_G(x,y;t) = \sum_{l=0}^\infty T^lH_{\tilde G} (x,y;t).
$$
Theorem 3.3 of \cite{LNY21} goes further to show that
 $$
 H_G(x,y;t)= H_{\tilde G}(x,y;t)+ te^{-Nt} u_y^{c}(x)+ \sum_{\ell=2}^{\infty} (-1)^{\ell -1} \frac{t^\ell}{\ell!}e^{-Nt} \left( \Delta_x^{c} \right)^{\ell-1}  u_y^{c}(x),
 $$

We can give a slightly different expression for $H_G(x,y;t)$ using matrix notation.
Since $G$ is undirected we have
$u_x^{c}(y)=u_y^{c}(x)$ for all $x,y\in VG$.
Let $B$ denote the $N\times N$ symmetric matrix with elements $b_{xy}=u_x^{c}(y)=u_y^{c}(x)$. Then starting with \eqref{eq:u}, it is easy to inductively show that
$$\left(H_{\tilde G}\ast (L_1H_{\tilde G})^{\ast \ell}\right)(x,y;t)=(-1)^{\ell}\frac{t^{\ell}}{\ell!}e^{-Nt}B^{\ell}(x,y),
$$
Therefore,
$$
H_G(x,y;t)= H_{\tilde G}(x,y;t)+ e^{-Nt}\sum_{\ell=1}^{\infty}\frac{t^\ell}{\ell!}B^{\ell}(x,y).
$$

Let us highlight the example when $G$ is a subgraph of $K_N$ obtained by deleting one edge from $K_{N}$.

\begin{example}\rm
Let $G_{1,2}$ be the graph that results from deleting the edge between
$1$ and $2$ in $K_N$. In this case, $B=(b_{ij})_{N\times N}$ with $b_{11}=b_{22}=1$; $b_{12}=b_{21}=-1$ and all other entries of $B$ are zeros.
Inductively, it is straightforward to show for any $\ell\geq 1$ we have that
\[
B^{\ell}(x,y)=\left\{
                       \begin{array}{ll}
                         2^{\ell-1}, & \text{ if  } x=y\in\{1,2\}, \\
                         -2^{\ell-1}, & \text{ if  } x,y\in\{1,2\},\, x\neq y, \\
                         0, & \text{ otherwise.}
                       \end{array}
                     \right.
\]
Therefore
\[
H_{G}(x,y;t)=H_{\tilde G}(x,y;t)
\]
except when $x,y\in\left\{ 1,2\right\}$, in which case we have that
\begin{align*}
H_{G}(x,x;t)&=H_{\tilde G} (x,x;t)+e^{-Nt}\sum_{\ell=1}^{\infty}2^{\ell-1}\frac{t^{\ell}}{\ell!}\\&=H_{\tilde G} (x,x;t)+e^{-Nt}(e^{2t}-1)/2
\end{align*}
and
\begin{align*}
H_{G}(x,y;t)&=H_{\tilde G}  (x,y;t)-e^{-Nt}\sum_{\ell=1}^{\infty}2^{\ell-1}\frac{t^{\ell}}{\ell!}\\&=H_{\tilde G} (x,y;t)-e^{-Nt}(e^{2t}-1)/2
\,\,\,\,\,
\text{\rm for $x\neq y$.}
\end{align*}

These computations show very explicitly and simply the effect on the heat kernel
from deleting an edge in the complete graph.  Specifically, one sees the change by a
negative number in $K_{G}(1,2;t)$ reflecting the disconnect between
the vertices $1$ and $2$.
 \end{example}

\section{Heat kernel on a discrete half-line}\label{sec: half-line}

The following example illustrates the construction of the heat kernel
on an infinite subgraph with finite boundary.

\begin{example}
\label{ex: discrete half line} Let $G$ be a discrete half-line beginning
with $0$.  More precisely, $G$ is the graph with vertices consisting of non-negative integers
such that each positive integer $j$ is connected only to its neighbors
$j+1$ and $j-1$, while $0$ is connected only to $1$. Then $G$
is an infinite subgraph of $\tilde{G}=\mathbb{Z}$ with the finite boundary $\partial{G}=\{0\}$.
Therefore Proposition \ref{prop: heat kernel subgraph of inf graph} applies to give an expression for the heat
kernel on $G$ by using the heat kernel $H(v,w;t)$ on $\mathbb{Z}$ as a parametrix.
 Let us
carry out these calculations in detail.

Recall that $H(v,w;t)=e^{-2t}I_{v-w}(2t)$;  see \cite{KN06}. The heat operator of $G$ acts on the $\Z$ heat kernel $H$ as
$$
L_{G,v}H(v,w;t)=\left\{ \begin{array}{ll}
e^{-2t}(I_{w+1}(2t)-I_{w}(2t)), & \text{ if }v=0;\\
0, & \text{ otherwise.}
\end{array}\right.
$$
Hence, the heat kernel on the discrete half-line $G$ is given by
\begin{equation}\label{eq: heat on half-line}
H_{G}(v,w;t)=e^{-2t}I_{|v-w|}(2t)+\sum_{\ell=1}^{\infty}(-1)^{\ell}\left(H\ast(L_{G,v}H)^{\ast\ell}\right)(v,w;t).
\end{equation}
We will now compute explicitly the convolution series on the right-hand side of \eqref{eq: heat on half-line}. We start by observing that
\begin{align}
  \left(H\ast(L_{G,v}H)\right)(v,w;t)&= \int\limits_{0}^t H(v,0;t-\tau)L_{G,v}H(0,w;\tau)d\tau \notag\\
  &= \frac{1}{2}e^{-2t}\int\limits_{0}^{2t}I_v(2t-u)\left(I_{w+1}(u) - I_w(u)\right)du.\label{eq: h conv lh}
\end{align}

Next, we derive a formula for the classical convolution of $I$-Bessel functions.
Let $m,n$ be non-negative integers. For $x\geq0$ we start with formula (1) on page 379 of \cite{Wa66}.
In the notation from \cite{Wa66} we take  $\mu=m$ and $\nu=n$ and
complex $z$ to get that
\[
z\int\limits _{0}^{\pi/2}J_{m}(z\cos^{2}\phi)J_{n}(z\sin^{2}\phi)\sin\phi\cos\phi d\phi=\sum_{k=0}^{\infty}(-1)^{k}J_{m+n+2k+1}(z),
\]
where $J$ stands for the classical $J$-Bessel function of the first kind.
By replacing $z$ with $iz$ and applying the identity $J_{n}(iz)=i^{n}I_{n}(z)$
(see \cite{GR07}, formula 8.406.3), the above equation becomes
\[
z\int\limits _{0}^{\pi/2}I_{m}(z\cos^{2}\phi)I_{n}(z\sin^{2}\phi)\sin\phi\cos\phi d\phi=\sum_{k=0}^{\infty}I_{m+n+2k+1}(z).
\]
The change of variables $t=z\sin^{2}\phi$ gives that
\begin{equation}
  \int\limits _{0}^{x}I_{n}(t)I_{m}(x-t)dt=2\sum_{k=0}^{\infty}I_{m+n+2k+1}(x).\label{eq: I-Bessel convolution formula}
\end{equation}
Combining \eqref{eq: h conv lh} and \eqref{eq: I-Bessel convolution formula} we arrive at
$$
\left(H\ast(L_{G,v}H)\right)(v,w;t)=e^{-2t} \sum_{k=1}^{\infty} (-1)^k I_{v+w+k}(2t)
$$
Applying the identity \eqref{eq: I-Bessel convolution formula} once again, it is immediate to see that
\begin{align*}
\left(H\ast(L_{G,v}H)^{\ast 2}\right)(v,w;t)&=\left(\left(H\ast(L_{G,v}H)\right)\ast(L_{G,v}H)\right) (v,w;t)\\&=e^{-2t} \sum_{(k_1,k_2)\in\mathbb{N}^2} (-1)^{k_1+k_2} I_{v+w+k_1+k_2}(2t).
\end{align*}
A simple inductive argument yields that, for any $\ell\geq 1$ the following identity holds true
$$
\left(H\ast(L_{G,v}H)^{\ast \ell}\right)(v,w;t)=e^{-2t} \sum_{(k_1,\ldots k_{\ell})\in\mathbb{N}^{\ell}} (-1)^{k_1+\ldots k_{\ell}} I_{v+w+k_1+\ldots k_{\ell}}(2t).
$$
Therefore,
$$
\sum_{\ell=1}^{\infty}(-1)^{\ell}\left(H\ast(L_{G,v}H)^{\ast\ell}\right)(v,w;t)= e^{-2t} \sum_{n=1}^{\infty} a(n) I_{v+w+n}(2t),
$$
where $a(1)=1$ and
$$
a(n)=(-1)^n \sum\limits_{\ell=1}^{n} (-1)^{\ell} |S(n,\ell)|, \quad \text{for} \quad n\geq 2,
$$
where $|S(n,\ell)|$ is the cardinality of the set $S(n,\ell)=\left\{(k_1,\ldots k_{\ell})\in\mathbb{N}^{\ell} :\, k_1+\ldots k_{\ell}=n \right\}$. Actually, $|S(n,\ell)|$ is the number of ordered partitions of $n\geq 2$ into $\ell\geq 1$ parts and it equals $\binom{n-1}{\ell-1}$, hence
$$
a(n)=(-1)^n \sum\limits_{\ell=1}^{n} (-1)^{\ell}\binom{n-1}{\ell-1}=0,\quad \text{for} \quad n\geq 2.
$$
This proves that
$$
\sum_{\ell=1}^{\infty}(-1)^{\ell}\left(H\ast(L_{G,v}H)^{\ast\ell}\right)(v,w;t)= e^{-2t} I_{v+w+1}(2t),
$$
hence the heat kernel on a discrete half-line is given by
$$
H_{G}(v,w;t)=e^{-2t}\left(I_{|v-w|}(2t)+I_{v+w+1}(2t) \right).
$$
\end{example}


\section{The Dirichlet heat kernel on a graph}\label{sec:Dirichlet-heat-kernel}

Let $G$ be a subgraph of a (possibly infinite) graph $\tilde{G}$ of finite and bounded
valence. As above, we denote the set of boundary
points of $G$ by $\partial G$.
We denote the set of interior points of $G$ by $\mathrm{Int}(G)=VG\setminus\partial G$.
Similarly, we may view
the graph with the vertex set $VG\setminus\partial G$ and edges between
the elements of the vertex set inherited from $G$ to be a subgraph
of $G$. The boundary of this subgraph will be denoted by $\partial(G\setminus\partial G)$.

We define
the Dirichlet heat kernel $H_{D,G}(v_{1},v_{2};t):VG\times VG\times[0,\infty)$
on $G$ with boundary $\partial G$
in analogy to the Dirichlet problem on a manifold with the boundary,
or on a regular domain; see, for example Chapter VII of  \cite{Ch84}.
Specifically, we define $H_{D,G}(v_{1},v_{2};t)$
to be the solution to the
differential equation
\[
L_{G,v_{1}}H_{D,G}(v_{1},v_{2};t):=\left(\Delta_{G,v_{1}}+\frac{\partial}{\partial t}\right)H_{D,G}(v_{1},v_{2};t)=0,\text{ if }v_{1}\in G\setminus\partial G
\]
with the property that
$$
\lim_{t\to0}H_{D,G}(v_{1},v_{2},t)=\delta_{v_{1}=v_{2}}
\,\,\,\,\,
\text{\rm for $v_{1}\in G\setminus\partial G$}
$$
together with the Dirichlet boundary condition
$$
H_{D,G}(v_{1},v_{2};t)=0
\,\,\,\,\,
\text{\rm for all $v_{2}\in G$ and $t\geq 0$ whenever $v_{1}\in\partial G$.}
$$
The existence and uniqueness of the Dirichlet heat kernel readily follows from results in \cite{Hu12}.
The Dirichlet heat kernel in the setting of a discrete interval was constructed from the
heat kernel on $\mathbb{Z}$ in \cite{Do12} using a symmetry based method of images.

\begin{prop}
\label{prop: dirichlet heat kernel} Let $\tilde{G}$ be a finite
or infinite graph with bounded valence. Let $G$ be a subgraph of
$\tilde{G}$ such that boundaries $\partial G$ and $\partial (G\setminus \partial G)$ are finite. Let $\tilde{H}(v_{1},v_{2};t)$
denote the heat kernel on $\tilde{G}$.  For $(v_{1},v_{2};t)\in VG\times VG\times[0,\infty)$
define the function
\[
H_{0}(v_{1},v_{2};t):=\left\{ \begin{array}{ll}
\tilde{H}(v_{1},v_{2};t), & \text{ for }v_{1}\in VG\setminus\partial G,v_{2}\in VG,t\geq0\\
0, & \text{ otherwise.}
\end{array}\right.
\]
Then the series
\begin{equation}
\sum_{\ell=1}^{\infty}(-1)^{\ell}(L_{G,v_{1}}H_{0})^{\ast\ell}(v_{1},v_{2};t)\label{eq:defining series for F}
\end{equation}
converges absolutely and uniformly on any compact subset of $VG\times VG\times[0,\infty)$.
Furthermore, the Dirichlet heat kernel $H_{D,G}(v_{1},v_{2};t)$ on $G$ is given by
\begin{equation}\label{eq:Dirichlet_heat_kernel}
H_{D,G}(v_{1},v_{2};t):=H_{0}(v_{1},v_{2};t)+(H_{0}\ast F)(v_{1},v_{2};t).
\end{equation}
\end{prop}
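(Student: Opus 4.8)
The strategy is to mimic the proofs of Proposition \ref{prop: heat kernel subgraph of inf graph} and Theorem \ref{thm: formula for heat kernel}, the only new feature being the Dirichlet boundary condition, which is automatically encoded in the vanishing of $H_0$ on $\partial G$. First I would establish that $L_{G,v_1}H_0(v_1,v_2;t)$, viewed as a function of the first variable, is supported on the \emph{finite} set $\partial G\cup\partial(G\setminus\partial G)$. Indeed, for $v_1\in\mathrm{Int}(G)$ with all neighbors also in $\mathrm{Int}(G)$ one has $L_{G,v_1}H_0=L_{G,v_1}\tilde H$ up to the terms coming from neighbors outside $VG$, and the latter vanish only when $v_1$ is an interior point whose $\tilde G$-neighbors all lie in $\mathrm{Int}(G)$; the failure set is exactly the union of $\partial G$ (where $H_0$ itself has been set to zero, creating a jump) and $\partial(G\setminus\partial G)$ (interior points of $G$ adjacent to $\partial G$). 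Since both boundaries are finite by hypothesis and $\tilde H$ is bounded (by the argument in the proof of Proposition \ref{prop: heat kernel on the cover parametrix}(ii)), I obtain a bound $|L_{G,v_1}H_0(v_1,v_2;t)|\le C$ for $v_1$ in this finite set and $0$ otherwise.

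Next I would run the convolution estimate exactly as in Proposition \ref{prop: heat kernel subgraph of inf graph}: because $L_{G,v_1}H_0$ is supported on a finite vertex set of cardinality, say, $M$, the convolution $(L_{G,v_1}H_0)^{\ast 2}$ is bounded by $MC^2 t$, and inductively $(L_{G,v_1}H_0)^{\ast\ell}$ is $O(t^{\ell-1}/(\ell-1)!)$ with constants controlled by powers of $MC$. The Weierstrass $M$-test then gives absolute and uniform convergence of \eqref{eq:defining series for F} on compact subsets of $VG\times VG\times[0,\infty)$, defining $F(v_1,v_2;t)$, and the analogue of \eqref{eq: convolution with inf sum} holds, namely $L_{G,v_1}H_0\ast F=\sum_{\ell\ge 1}(-1)^\ell(L_{G,v_1}H_0)^{\ast(\ell+1)}$. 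I would also note that Lemma \ref{lem: L of convol} applies with $H$ replaced by $H_0$: the only input needed is that $H_0(v_1,v;0)=\delta_{v_1=v}$ for the Leibniz-rule step, which holds because $H_0(v_1,v;t)=\tilde H(v_1,v;t)$ whenever $v_1\in\mathrm{Int}(G)$, and the boundary rows are treated separately below.

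With these ingredients in hand, set $H_{D,G}:=H_0+(H_0\ast F)$. For $v_1\in\mathrm{Int}(G)$ the computation is verbatim that of Theorem \ref{thm: formula for heat kernel}: applying Lemma \ref{lem: L of convol} and the rearrangement above gives $L_{G,v_1}H_{D,G}(v_1,v_2;t)=L_{G,v_1}H_0+F+(L_{G,v_1}H_0\ast F)=0$, and the Dirac initial condition $\lim_{t\to 0}H_{D,G}(v_1,v_2;t)=\delta_{v_1=v_2}$ follows since $\lim_{t\to 0}H_0(v_1,v_2;t)=\delta_{v_1=v_2}$ for interior $v_1$ and $(H_0\ast F)=O(t)$ by Lemma \ref{lem:convolution bounds}. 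It then remains to check the Dirichlet boundary condition: for $v_1\in\partial G$ one has $H_0(v_1,v_2;t)=0$ by definition, and $(H_0\ast F)(v_1,v_2;t)=\int_0^t\sum_{v}H_0(v_1,v;t-r)F(v,v_2;r)\,dr=0$ as well, again because the first argument $v_1\in\partial G$ forces $H_0(v_1,v;t-r)=0$ for every $v$. Hence $H_{D,G}(v_1,v_2;t)=0$ for $v_1\in\partial G$, and by the uniqueness statement from \cite{Hu12} the constructed function is the Dirichlet heat kernel.

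\textbf{Main obstacle.} The one point requiring genuine care is the justification that $L_{G,v_1}H_0$ has finite support \emph{and} that Lemma \ref{lem: L of convol} (the interchange of $L_{v_1}$ with the convolution integral, via Leibniz) remains valid for the truncated parametrix $H_0$, since $H_0$ is not the restriction of a heat kernel but a function that has been forcibly zeroed on $\partial G$; one must confirm that the only effect of this truncation is to enlarge the support of $L_{G,v_1}H_0$ to the finite set $\partial G\cup\partial(G\setminus\partial G)$ and does not disturb the $t\to 0$ behavior or the termwise integration. Everything else is a routine repetition of the estimates already carried out in Section \ref{sec:The-parametrix-construction} and in the proof of Proposition \ref{prop: heat kernel subgraph of inf graph}.
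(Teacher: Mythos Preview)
Your proposal is correct and follows essentially the same route as the paper's proof: you identify the finite support of $L_{G,v_1}H_0$ on $\partial G\cup\partial(G\setminus\partial G)$, derive the $O(t^{\ell-1}/(\ell-1)!)$ bound on the iterated convolutions exactly as in Proposition~\ref{prop: heat kernel subgraph of inf graph}, and then verify the heat equation on $\mathrm{Int}(G)$ via Lemma~\ref{lem: L of convol} and the Dirichlet condition on $\partial G$ directly from the vanishing of $H_0$ there. The ``main obstacle'' you flag---that Lemma~\ref{lem: L of convol} still applies to the truncated $H_0$ because on interior rows $H_0$ agrees with $\tilde H$ and hence inherits its Dirac property---is precisely the point the paper singles out as well.
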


\begin{proof}
First, we note that our assumptions on $\tilde{G}$ ensure existence
and uniqueness of $\tilde{H}$; see for example \cite{DM06}).  Hence,  the function
$H_{0}$ is well defined.  Let $L_{\tilde{G},v_{1}}$ denote
the heat operator on $\tilde{G}$.  Then it is immediate that for all
vertices $v_{1}\in VG\setminus(\partial G\cup\partial(G\setminus\partial G))$,
all vertices $v_{2}\in VG$ and any $t\geq 0$ one has that
\[
L_{\tilde{G},v_{1}}H_{0}(v_{1},v_{2};t)=L_{G,v_{1}}\tilde{H}(v_{1},v_{2};t)=0.
\]
In particular, this
shows that $L_{G,v_{1}}H_{0}(v_{1},v_{2};t)$ is supported on
$(\partial G\cup\partial(G\setminus\partial G))\times VG\times[0,\infty)$.
Therefore, for all $\ell\geq1$, the convolution $(L_{G,v_{1}}H_{0})^{\ast\ell}(v_{1},v_{2};t)$
is supported on the set $(\partial G\cup\partial(G\setminus\partial G))\times VG\times[0,\infty)$.

Choose an arbitrary, but fixed finite subset $A$ of $VG$.
We now will
prove for any $T>0$ the convergence of the series \eqref{eq:defining series for F}
on the set $(\partial G\cup\partial(G\setminus\partial G))\times A\times[0,\infty)$,
and that the convergence is absolute and uniform on the set.

From Proposition \ref{prop: heat kernel on the cover parametrix}, the functions $H_{0}(v_{1},v_{2};t)$
and $\displaystyle \frac{\partial}{\partial t}H_{0}(v_{1},v_{2};t)$
are uniformly bounded in $t$.  Denote an upper bound the two functions by $C(v_{1},v_{2})$.
Therefore, for every finite subset $A$ of $VG$ the function
$L_{G,v_{1}}H_{0}(v_{1},v_{2};t)$ is bounded by a certain constant
$C(A)$ which depends only on the graph $G$ and the set $A$, and
can be chosen to be a constant depending on the graph times $\max_{v_{1}\in(\partial G\cup\partial(G\setminus\partial G)),v_{2}\in A}\{C(v_{1},v_{2})\}$.

Let $C$ denote the cardinality of the set $\partial G\cup\partial(G\setminus\partial G)$.
Then
\begin{align*}
(L_{G,v_{1}}H_{0}\ast L_{G,v_{1}}H_{0})(v_{1},v_{2};t) & =\sum_{v\in\partial G\cup\partial(G\setminus\partial G)}\int\limits _{0}^{t}L_{G,v_{1}}H_{0}(v_{1},v;t-\tau)L_{G,v_{1}}H_{0}(v,v_{2};\tau)d\tau\\
 &= O\left( C\cdot C(A)t\right),
\end{align*}
where the implied constant depends only upon $G$.  By proceeding
as in the proof of Lemma \ref{lem:convergence of the series}, we deduce
that
\[
(L_{G,v_{1}}H_{0})^{\ast\ell}(v_{1},v_{2};t)=O\left( C\cdot\frac{(C\cdot C(A)t)^{\ell-1}}{(\ell-1)!}\right),
\]
where as before where the implied constant depends only upon $G$.
With these bounds, we have proved the
absolute and uniform convergence of the series \eqref{eq:defining series for F}
on the set $(\partial G\cup\partial(G\setminus\partial G))\times A\times[0,\infty)$.

It is left to prove that the right-hand side of \eqref{eq:Dirichlet_heat_kernel} is
equal to the Dirichlet
heat kernel $H_{D,G}(v_{1},v_{2};t)$ on $G$.  For $v_{1}\in\partial G$, $v_{2}\in G$ and
all $t\geq 0$, and by the definition of $H_{0}$ and
convolution, it is immediate that $H_{D,G}(v_{1},v_{2};t)=0$. What remains is to
prove that $L_{G,v_{1}}H_{0}(v_{1},v_{2};t)=0$ for
all $v_{1}\in VG\setminus\partial G$, $v_{2}\in G$ and all $t\geq0$.

When $v_{1}\in VG\setminus\partial G$, $H_{0}(v_{1},v_{2};t)=\tilde{H}(v_{1},v_{2};t)$.
Hence, by reasoning as in the proof of Lemma \ref{lem: L of convol},
and by using the Dirac property of $\tilde{H}$ we get that
\[
L_{G,v_{1}}(H_{0}\ast F)(v_{1},v_{2};t)=F(v_{1},v_{2};t)+((L_{G,v_{1}}H_{0})\ast F)(v_{1},v_{2};t).
\]
The  absolute and uniform convergence of the series
\eqref{eq:defining series for F} suffices to deduce that $L_{G,v_{1}}H_{0}(v_{1},v_{2};t)=0$
for all $v_{1}\in VG\setminus\partial G$, $v_{2}\in G$ and all $t\geq0$,
as in the proof of Theorem \ref{thm: formula for heat kernel}.
\end{proof}

As an example, let us compute the fundamental solution to the Dirichlet
problem on the discrete half-line $\{0,1,2,\ldots\}$ when starting with
the heat kernel on the graph $\mathbb{Z}$.

\begin{example} \label{ex: half line dirichhlet heat}
With the notation from Example \ref{ex: discrete half line}, let $G$
be the discrete half-line $\{0,1,2,\ldots\}$ when viewed as a subgraph of
$\mathbb{Z}$. Let us set
\[
H_{0}(x,y;t):=\left\{ \begin{array}{ll}
e^{-2t}I_{|x-y|}(2t), & \text{ for }x,y\in\mathbb{Z},x\geq1,y\geq0\\
0, & \text{ if }x=0.
\end{array}\right.
\]
Then the Dirichlet heat kernel on $G$ is given by Proposition \ref{prop: dirichlet heat kernel}.
In order to compute it explicitly, one first notices that
\[
L_{G,x}H_{0}(x,y;t)=\left\{ \begin{array}{ll}
0, & \text{ when }x\geq2;\\
e^{-2t}I_{y}(2t), & \text{ when }x=1\\
-e^{-2t}I_{|y-1|}(2t), & \text{ when }x=0.
\end{array}\right.
\]
Let
\[
F(x,y;t):=\sum_{\ell=1}^{\infty}(-1)^{\ell}(L_{G,x}H_{0})^{\ast\ell}(x,y;t),\quad x,y=0,1,2,...;\quad t\geq0.
\]
When viewed as a function of the first variable $x$, $L_{G,x}H_{0}(x,y;t)$
is supported on the set $\{0,1\}$.  Then, $F(x,y;t)$ as a function of $x$ is
also supported on the set $\{0,1\}$.  Therefore,
we need to find an expression for $F$ when $x=0$ and $x=1$.

\vskip .06in
\textbf{Case 1.} When $x=0$.
First, we compute $(L_{G,x}H_{0})^{\ast\ell}(0,y;t)$
for $\ell\geq2$. When $y\geq1$ we have that
\[
(L_{G,x}H_{0})^{\ast2}(0,y;t)=e^{-2t}\int\limits _{0}^{t}(I_{1}(2(t-\tau))I_{|y-1|}(2\tau)-I_{0}(2(t-\tau))I_{y}(2\tau))d\tau=0
\]
while
$$
(L_{G,x}H_{0})^{\ast2}(0,0;t)=-e^{-2t}I_{1}(2t)=L_{G,x}H_{0}(0,0;t).
$$
Since $(L_{G,x}H_{0})^{\ast2}(0,1;t)=0$, we can now deduce that
$$
(L_{G,x}H_{0})^{\ast2\ell}(0,0;t)=(L_{G,x}H_{0})^{\ast(2\ell-1)}H_{0}(0,0;t)
\,\,\,\,\,
\text{\rm for all $\ell\geq1$.}
$$
It is somewhat straightforward to show that $(L_{G,x}H_{0})^{\ast2\ell}(0,y;t)=0$
unless $y=0$.  Also, we have that
\[
(L_{G,x}H_{0})^{\ast(2\ell+1)}(0,y;t)=\frac{(-1)^{\ell+1}e^{-2t}}{2^{\ell}}((I_{1})^{\ast\ell}\ast I_{(y-1)})(2t),\quad\text{for }y\geq1.
\]
Note that the symbol $\ast$ in the above display is used for two types of convolution: the graph convolution on the left-hand side and the classical convolution of two functions on the right-hand side. We will continue using this notation, as it is clear from the context which type of convolution is used.

With all this, we can conclude that $F(0,0;t)\equiv0$ for all $t\geq0$ and
\[
F(0,y;t)=\sum_{\ell=0}^{\infty}\frac{(-1)^{\ell}e^{-2t}}{2^{\ell}}((I_{1})^{\ast\ell}\ast I_{|y-1|})(2t)
\,\,\,\,\,
\text{\rm for $y\geq 1$,}
\]
where we interpret a $0$-fold convolution to act as the identity operator:
\[((I_l)^{\ast 0}\ast f)(t)=f(t)\mbox{ for all }f:(0,\infty\to\R.
\]

\textbf{Case 2.} $x=1$.  We shall compute $(L_{G,x}H_{0})^{\ast\ell}(1,y;t)$
for $\ell\geq2$. When $\ell=2$ we get that
\begin{align*}
(L_{G,x}H_{0})^{\ast2}(1,y;t) & =e^{-2t}\int\limits _{0}^{t}(I_{1}(2(t-\tau))I_{y}(2\tau)-I_{0}(2(t-\tau))I_{|y-1|}(2\tau))d\tau\\
 & =\left\{ \begin{array}{ll}
0, & \text{ for }y=0;\\
-e^{-2t}I_{y}(2t), & \text{ for }y\geq1.
\end{array}\right.
\end{align*}
Analogous calculations as in Case 1 give that
\[
F(1,y;t)=\left\{ \begin{array}{ll}
-2\sum\limits_{\ell=0}^{\infty}\frac{(-1)^{\ell}e^{-2t}}{2^{\ell}}((I_{1})^{\ast\ell}\ast I_{y})(2t), & \text{ for }y\geq1;\\
-\sum\limits_{\ell=0}^{\infty}\frac{(-1)^{\ell}e^{-2t}}{2^{\ell}}((I_{1})^{\ast\ell}\ast I_{0})(2t)), & \text{ for }y=0.
\end{array}\right.
\]

We now can  compute the convolution of $H_{0}$ and $F$ in closed
form. First, notice that the convolution of $I_{m}$
and $I_{n}$ depends only on the sum of $m$ and $n$, and that it
is commutative operation. Therefore, for $x,y\geq1$ and $t\geq0$
we have
\[
\left(I_{x}\ast \left[(I_{1})^{\ast \ell}\ast I_{y-1}\right]\right)(2t)
=\left(I_{x-1}\ast \left[(I_{1})^{\ast \ell}\ast I_{y}\right]\right)(2t)
=\left[(I_{1})^{\ast \ell}\ast (I_{x-1}\ast I_{y})\right](2t).
\]
The convolution series over
$\ell$ converges absolutely and uniformly  over compact subsets of $VG\times VG\times[0,\infty)$.
With this, we get a closed formula for $H_{0}\ast F(x,y;t)$.  Specifically, for
all $x\geq1$, $y\geq0$ and $t\geq0$, we have that
\[
H_{0}\ast F(x,y;t)=e^{-2t}\sum_{\ell=0}^{\infty}
\frac{(-1)^{\ell+1}e^{-2t}}{2^{\ell+1}}\left((I_{1})^{\ast \ell}\ast (I_{x-1}\ast I_{y})\right)(2t).
\]
Therefore, the Dirichlet heat kernel on the half-line $\{0,1,2,\ldots\}$
is given for $t\geq0$ by
\[
H_{D,G}(x,y;t)=e^{-2t}\left\{ \begin{array}{ll}
I_{|x-y|}(2t)+\sum\limits_{\ell=0}^{\infty}\frac{(-1)^{\ell+1}e^{-2t}}{2^{\ell+1}}
\left((I_{1})^{\ast \ell}\ast (I_{x-1}\ast I_{y})\right)(2t), & \text{ for }x\geq1,y\geq0\\
0, & \text{ if }x=0.
\end{array}\right.
\]

On the other hand, it is easy to check directly that for all $x,y\geq0$ and
$t\geq0$ the Dirichlet heat kernel on the half-line $G$ can be expressed
as
\[
H_{D,G}(x,y;t)=e^{-2t}(I_{x-y}(2t)-I_{x+y}(2t)).
\]
The uniqueness of the Dirichlet heat kernel immediately implies the
identity that
\[
I_{x+y}(t)=\sum_{\ell=0}^{\infty}\frac{(-1)^{\ell}}{2^{\ell+1}}
\left((I_{1})^{\ast \ell}\ast (I_{x-1}\ast I_{y})\right)(t)
\]
for all integers $x\geq1$ and $y\geq0$. From this expression, we will
point out the two interesting special cases that
$$
I_{1}(t)=\sum_{\ell=0}^{\infty}\frac{(-1)^{\ell}}{2^{\ell+1}}\left((I_{1})^{\ast \ell}\ast (I_{0})^{\ast 2}\right)(t)
$$
and
$$
I_{2}(t)=\sum_{\ell=0}^{\infty}\frac{(-1)^{\ell}}{2^{\ell+1}}\left((I_{1})^{\ast (\ell+1)}\ast I_{0}\right)(t).
$$
\end{example}

\vspace{5mm}
\noindent
Gautam Chinta \\
 Department of Mathematics \\
 The City College of New York \\
 Convent Avenue at 138th Street \\
 New York, NY 10031 U.S.A. \\
 e-mail: gchinta@ccny.cuny.edu

\vspace{5mm}
\noindent
Jay Jorgenson \\
 Department of Mathematics \\
 The City College of New York \\
 Convent Avenue at 138th Street \\
 New York, NY 10031 U.S.A. \\
 e-mail: jjorgenson@mindspring.com

\vspace{5mm}
\noindent
Anders Karlsson \\
 Section de mathématiques\\
 Université de Genève\\
 2-4 Rue du Liévre\\
 Case Postale 64, 1211\\
 Genève 4, Suisse\\
 e-mail: anders.karlsson@unige.ch \\
 and \\
 Matematiska institutionen \\
 Uppsala universitet \\
Box 256, 751 05 \\
 Uppsala, Sweden \\
 e-mail: anders.karlsson@math.uu.se

\vspace{5mm}
\noindent
Lejla Smajlovi\'{c} \\
 Department of Mathematics and Computer Science\\
 University of Sarajevo\\
 Zmaja od Bosne 35, 71 000 Sarajevo\\
 Bosnia and Herzegovina\\
 e-mail: lejlas@pmf.unsa.ba

\begin{thebibliography}{}

\bibitem[Av01]{Av01} I. Avramidi: \emph{Heat kernel approach in quantum field theory},
Quantum gravity and spectral geometry (Napoli, 2001). Nuclear Phys. B Proc. Suppl. 104 (2002), 3\textendash 32.


\bibitem[BIK14]{BIK14} D. Burago, S. Ivanov, Y. Kurylev: A graph
discretization of the Laplace-Beltrami operator, J. Spectr. Theory
\textbf{4} (2014), 675\textendash 714.

\bibitem[CHJSV23]{CHJSV23} C. A. Cadavid, P. Hoyos, J. Jorgenson, L. Smajlovi\' c, J. D. V\' elez: On an approach for evaluating certain trigonometric character sums using the discrete time heat kernel, European J. Combin. \textbf{108} (2023), Paper No. 103635, 23 pp.

\bibitem[Ch84]{Ch84} I. Chavel: \emph{Eigenvalues in Riemannian geometry},
Academic Press, 1984.

\bibitem[Do84]{Do84} J. Dodziuk: Elliptic operators on infinite graphs,
Analysis, geometry and topology of elliptic operators: papers in honor
of Krzysztof P. Wojciechowski, World Sci. Publ., Hackensack, NJ, 2006,
353\textendash 368.

\bibitem[DM06]{DM06} J. Dodziuk, V. Mathai: Kato's inequality and
asymptotic spectral properties for discrete magnetic Laplacians, The
ubiquitous heat kernel, Contemp. Math., vol. \textbf{398}, Amer. Math.
Soc., Providence, RI, 2006, 69\textendash 81.

\bibitem[Do12]{Do12} J. S. Dowker: Heat kernels on the discrete circle and interval, Preprint, arXiv:1207.2096v2, 2012.

\bibitem[GJ18]{GJ18} D. Garbin, J. Jorgenson: Spectral asymptotics
on sequences of elliptically degenerating Riemann surfaces,
L'Enseignement Math\'ematique \textbf{64} (2018), 161\textendash 206.

\bibitem[GR07]{GR07} I. S. Gradshteyn, I. M. Ryzhik: \emph{Table
of integrals, series and products}. Elsevier Academic Press, Amsterdam,
2007.


\bibitem[Gr09]{Gr09} A. Grigor'yan: Heat kernel and analysis
on manifolds. AMS/IP Studies in Advanced Mathematics, 47. American
Mathematical Society, Providence, RI; International Press, Boston,
MA, 2009. xviii+482 pp.

\bibitem[Gr22]{Gr22} A. Grigor'yan, Y. Lin, S.-T. Yau: Discrete tori and trigonometric sums, J. Geom. Anal. \textbf{32} (2022), no. 12, Paper No. 298, 17 pp.

\bibitem[Hu12]{Hu12}X. Huang: On uniqueness class for a heat equation on graphs, J. Math. Anal. Appl. \textbf{393} (2012), no. 2, 377\textendash 388.

\bibitem[JoLa01]{JoLa01} J. Jorgenson, S. Lang:
The ubiquitous heat kernel,
in: \it Mathematics Unlimited - 2001 and Beyond, \rm ed.
Enquist and Schmid, \it Springer-Verlag \rm (2001) 655\textendash 684.

\bibitem[KN06]{KN06} A. Karlsson, M. Neuhauser: Heat kernels, theta
identities, and zeta functions on cyclic groups. Topological and asymptotic
aspects of group theory, Contemp. Math. \textbf{394}, Amer. Math.
Soc., Providence, RI, 2006, 177\textendash 189.

\bibitem[LL05]{LL05} J. Lafferty, G. Lebanon: Diffusion kernels on statistical manifolds. J. Mach. Learn. Res \textbf{6} (2005), 129\textendash 163.

\bibitem[LNY21]{LNY21} Y. Lin, S.-M. Ngai, S.-T. Yau:
Heat kernels on forms defined on a subgraph of a complete graph. Math.
Ann. 380 (2021), no. 3-4, 1891\textendash 1931.

\bibitem[Mi49]{Mi49} S. Minakshisundaram: A generalization of Epstein
zeta functions. With a supplementary note by Hermann Weyl, Canad.
J. Math, \textbf{1} (1949), 320\textendash 327.

\bibitem[Mi53]{Mi53} S. Minakshisundaram: Eigenfunctions on Riemannian
manifolds, J. Indian Math. Soc. (N.S.) \textbf{17} (1953), 159\textendash 165.

\bibitem[MP49]{MP49} S. Minakshisundaram, \AA. Pleijel: Some properties
of the eigenfunctions of the Laplace-operator on Riemannian manifolds,
Canad. J. Math. \textbf{1} (1949), 242\textendash 256.

\bibitem[Ro97]{Ro97}
S. Rosenberg: \textit{The Laplacian on a Riemannian Manifold: An Introduction to Analysis on Manifolds}, Cambridge University Press, Cambridge.

\bibitem[Wa66]{Wa66} G. N. Watson: \textit{A Treatise on the Theory
of Bessel Functions}, Cambridge University Press, Cambridge, England,
1966.

\bibitem[Wo21]{Wo21} R. K. Wojciechowski: Stochastic completeness of graphs: bounded Laplacians, intrinsic metrics, volume growth and curvature, J. Fourier Anal. Appl. \textbf{27} (2021), no. 2, Paper No. 30, 45 pp.

\end{thebibliography}
\end{document}